\theoremstyle{plain}
\newtheorem{Thm}{Theorem}
\newtheorem{Lem}[Thm]{Lemma}
\newtheorem{Cor}[Thm]{Corollary}
\newcommand{\cH}{\ensuremath{\mathcal{H}}}
\begin{document}

\begin{frontmatter}

\title{The competition hypergraphs  of doubly partial orders}

\author[label1]{Suh-Ryung KIM}
\author[label2]{Jung Yeun LEE}
\author[label3]{Boram PARK\corref{cor1}}
\author[label4]{Yoshio SANO}

\address[label1]{Department of Mathematics Education,
Seoul National University, Seoul 151-742, Korea}
\address[label2]{National Institute for Mathematical Sciences,
Daejeon 305-390, Korea}
\address[label3]{DIMACS, Rutgers University, Piscataway, NJ 08854, United States}
\address[label4]{National Institute of Informatics,
Tokyo 101-8430, Japan}

\cortext[cor1]{Corresponding author. {\it E-mail address}:
kawa22@snu.ac.kr; borampark22@gmail.com}

\journal{arXiv}

\begin{abstract}
Since Cho and Kim (2005)~\cite{chokim} showed that
the competition graph of a doubly partial order is an interval graph,
it has been actively studied whether or not the same phenomenon occurs
for other variants of competition graphs and
interesting results have been obtained.
Continuing in the same spirit, we study the competition hypergraph,
an interesting variant of the competition graph,
of a doubly partial order.
Though it turns out that the competition hypergraph of
a doubly partial order is not always interval,
we completely characterize the competition hypergraphs
of doubly partial orders which are interval.
\end{abstract}

\begin{keyword}
Competition hypergraphs, Competition graphs, Doubly partial orders,
Interval hypergraphs

\MSC[2010] 05C75, 05C20
\end{keyword}

\end{frontmatter}

\section{Introduction}

Given a digraph $D$, the {\em competition graph} $C(D)$ of $D$
is a graph which has the same vertex set as $D$ and
has an edge between vertices $u$ and $v$
if and only if there exists a common out-neighbor of $u$ and $v$ in $D$.
The notion of the competition graph is due to Cohen~\cite{cohen1}
and has arisen from ecology.
Competition graphs also have applications in coding, radio
transmission, and modeling of
complex economic systems (see \cite{RayRob, Bolyai}).
Since Cohen introduced the notion of the competition graph,
various variations have been defined and studied by many authors
(see \cite{cable, firsti, sc} and the survey articles 
\cite{Kim93, Lundgren89}).

Cohen~\cite{cohen1, cohen2} observed empirically that
most competition graphs of acyclic digraphs representing food webs
are interval graphs.
A graph $G$ is an {\em interval graph}
if we can assign to
each vertex $v$ in $G$ a real interval $J(v) \subseteq \mathbb{R}$
such that
there is an edge between two distinct vertices
$v$ and $w$
if and only if $J(v) \cap J(w) \neq \emptyset$.
Cohen's observation and the continued
preponderance of examples that are interval graphs led to a large
literature devoted to attempts to explain the observation and to
study the properties of competition graphs.
Roberts~\cite{Rob78}
showed that every graph can be made into the competition graph of an
acyclic digraph by adding isolated vertices.  He then asked for
a characterization of acyclic digraphs whose competition graphs
 are interval. The study of acyclic digraphs whose competition graphs are
interval led to several new problems and applications
(see \cite{fi, Fraughnaugh, kimrob, LK, lunmayras, sano}).
As one of the consequences, Cho and Kim~\cite{chokim} found
an interesting class of acyclic digraphs called ``doubly partial orders"
with interval competition graphs.
We denote by $\prec$ the partial order
$\{((x_1,x_2),(y_1,y_2))\mid x_1 < y_1, x_2<y_2\}$ on  $\mathbb{R}^2$.
A digraph $D$ is called a {\it doubly partial order} (a DPO for short)
if there exist a finite subset $V$ of $\mathbb{R}^2$
and a bijection $\phi: V(D) \rightarrow V$
such that $A(D) = \{(x,y) \mid \phi(y) \prec \phi(x), x,y \in V(D) \}$.
The following theorem clarifies the relationship
between interval graphs and the competition graphs of doubly partial orders.

\begin{Thm}[\cite{chokim}]\label{thm;DPOIntKim}
The competition graph of a doubly partial order is an interval graph,
and an interval graph with sufficiently many isolated vertices is
the competition graph  of a doubly partial order.
\end{Thm}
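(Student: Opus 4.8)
The plan is to treat a DPO concretely: identify each vertex $v$ with its point $(v_1,v_2)\in\mathbb{R}^2$, so that $u\to w$ precisely when $w_1<u_1$ and $w_2<u_2$, and hence two vertices $u,v$ are adjacent in $C(D)$ exactly when some $w\in V$ lies in their common lower-left quadrant, i.e. $w_1<\min(u_1,v_1)$ and $w_2<\min(u_2,v_2)$. I would prove the two assertions separately, each by an explicit construction.

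For the first assertion, the key observation is that the predicate ``the quadrant below-left of $(a,c)$ contains a point of $V$'' is monotone and therefore governed by a single threshold function. Define $\beta(a)=\min\{w_2 : w\in V,\ w_1<a\}$ (with $\beta(a)=+\infty$ when this set is empty); then $\beta$ is non-increasing in $a$, and $u,v$ are adjacent iff $\beta(\min(u_1,v_1))<\min(u_2,v_2)$. I would then assign to each vertex the interval $J(v)=(\beta(v_1),\,v_2]$. A short computation shows that for $u_1\le v_1$ one has $J(u)\cap J(v)=(\beta(u_1),\min(u_2,v_2)]$, which is nonempty exactly when $\beta(\min(u_1,v_1))<\min(u_2,v_2)$, i.e. exactly when $u\sim v$. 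Vertices with empty lower-left quadrant are necessarily isolated and receive degenerate intervals, which can be replaced by pairwise-disjoint dummy intervals; since $V$ is finite the half-open intervals can be turned into genuine closed ones by perturbing into the gaps between coordinate values. This yields an interval representation of $C(D)$.

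For the converse, start from an interval graph $G$ and invoke the classical consecutive clique arrangement: the maximal cliques can be linearly ordered $C_1,\dots,C_m$ so that for each vertex $v$ the set of indices $i$ with $v\in C_i$ is an interval $\{f(v),\dots,g(v)\}$. The idea is to realize each clique by a single witness point placed on a strictly decreasing staircase. Concretely, put a witness $w_i=(i,-i)$ for $1\le i\le m$ and a vertex point $p_v=(g(v)+\tfrac12,\,-f(v)+\tfrac12)$. Because the $w_i$ form an antichain with increasing first and decreasing second coordinates, one checks $w_i\prec p_v$ iff $f(v)\le i\le g(v)$, i.e. iff $v\in C_i$. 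Hence $w_i$ lies below-left of both $p_u$ and $p_v$ iff $u,v\in C_i$; since every edge of $G$ lies in some maximal clique and each $C_i$ is a clique, the competition graph on $\{p_v\}$ is exactly $G$. One must also rule out spurious witnesses: if $p_u\prec p_v$ then $[f(u),g(u)]\subseteq[f(v),g(v)]$, so $u$ and $v$ already share a clique and no new edge appears; and each $w_i$ has empty lower-left quadrant (no $w_j$ and no $p_v$ lies below-left of it, since $f(v)\le g(v)$), so the $w_i$ are isolated. Thus $C(D)$ equals $G$ together with the $m$ isolated vertices $w_1,\dots,w_m$.

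The main obstacle is the representation step in the first assertion: its real content is the realization that the two-dimensional, ``take coordinatewise minimum'' adjacency condition collapses onto the one-dimensional non-increasing frontier $\beta$, which is exactly what permits a single interval per vertex; matching the endpoints of $J(v)$ to the threshold precisely (rather than merely producing some interval graph with slack) is the delicate point. In the converse the analogous difficulty is engineering the witness coordinates so that each witness's up-set meets exactly the vertices of one clique and nothing more, for which the antichain-on-a-decreasing-staircase placement together with the consecutiveness of the clique arrangement is precisely what is needed; the remaining verifications (no extra edges, witnesses isolated) are routine case checks.
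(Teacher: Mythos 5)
The statement you are proving is quoted in the paper from Cho and Kim's article \cite{chokim}; the present paper gives no proof of it, so there is nothing internal to compare against, and your argument must stand on its own. It essentially does. Your reduction of the two-dimensional adjacency condition to the non-increasing threshold $\beta(a)=\min\{w_2: w\in V,\ w_1<a\}$ and the assignment $J(v)=(\beta(v_1),v_2]$ is correct: for $u_1\le v_1$ monotonicity of $\beta$ gives $J(u)\cap J(v)=(\beta(u_1),\min(u_2,v_2)]$, which is nonempty exactly when $u$ and $v$ have a common out-neighbor, and the degenerate/half-open issues are handled as you say. The converse construction via a consecutive arrangement of maximal cliques, with witnesses $w_i=(i,-i)$ and $p_v=(g(v)+\tfrac12,-f(v)+\tfrac12)$, also verifies correctly: $w_i\prec p_v$ iff $v\in C_i$, the $w_i$ have empty lower-left quadrants, and any $p_u$ lying below-left of both $p_v$ and $p_{v'}$ forces $f(u)\in[f(v),g(v)]\cap[f(v'),g(v')]$, so $v,v'\in C_{f(u)}$ and no spurious edge arises.

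Two small repairs are needed. First, the map $v\mapsto p_v$ need not be injective: if two vertices lie in exactly the same set of maximal cliques (already for $G=K_2$, where $p_u=p_v=(\tfrac32,\tfrac12)$), your point set is not in bijection with $V(G)\cup\{w_1,\dots,w_m\}$, and the paper's definition of a doubly partial order requires a bijection onto a finite subset of $\mathbb{R}^2$. This is fixed by perturbing each $p_v$ by a distinct $\epsilon_v\in(0,\tfrac12)$ added to both coordinates, which preserves all the comparability checks. Second, you produce $G$ together with exactly $m$ isolated vertices, whereas ``sufficiently many'' should allow any larger number; this is closed by adding extra points of the form $(-N-j,\,N+j)$ for large $N$, which are incomparable to every other point and hence isolated without creating edges. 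With these adjustments the proof is complete.
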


Since then, it has been actively studied whether or not
the same phenomenon occurs for other variants of competition graphs
and interesting results have been obtained.

\begin{Thm}[\cite{SJkim}]
The competition-common enemy graph of a doubly partial order
is an interval graph
unless it contains a $4$-cycle as an induced subgraph.
In addition, an interval graph with sufficiently many isolated vertices
is the competition-common enemy graph of a doubly partial order.
\end{Thm}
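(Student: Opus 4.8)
The plan is to read off adjacency in the competition--common enemy graph $CCE(D)$ directly from the plane. Identifying each vertex with its point $\phi(v)=(v_1,v_2)$, two vertices $u,v$ have a common prey (common out-neighbor) exactly when some point of $V$ lies strictly below and to the left of the \emph{meet} $(\min(u_1,v_1),\min(u_2,v_2))$, and they have a common enemy (common in-neighbor) exactly when some point of $V$ lies strictly above and to the right of the \emph{join} $(\max(u_1,v_1),\max(u_2,v_2))$; an edge of $CCE(D)$ requires both. In particular $CCE(D)$ is the intersection, on a common vertex set, of the competition graph $C(D)$ and the analogous ``common-enemy graph.'' The latter is the competition graph of the DPO obtained from $V$ by the central reflection $(x_1,x_2)\mapsto(-x_1,-x_2)$, which reverses $\prec$; hence by Theorem~\ref{thm;DPOIntKim} both $C(D)$ and the common-enemy graph are interval, and each carries a canonical linear order on its maximal cliques coming from the lower-left, respectively the upper-right, staircase of $V$. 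The easy half of the statement is then immediate: an interval graph is chordal whereas $C_4$ is not, so if $CCE(D)$ contains an induced $4$-cycle it cannot be interval.

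For the converse I would invoke the Lekkerkerker--Boland characterization: a graph is interval if and only if it is chordal and contains no asteroidal triple, equivalently if and only if it contains none of their explicit list of forbidden induced subgraphs, namely the chordless cycles $C_n$ with $n\ge4$ together with three further families each built around an asteroidal triple. Assuming $CCE(D)$ has no induced $C_4$, I want to show it contains none of these obstructions. The basic tool is the dichotomy governing non-edges: if $uv$ is a non-edge of $CCE(D)$ then either $u,v$ share no prey or they share no enemy. Tracking, along any candidate obstruction, how the meets and joins of consecutive vertices travel through the two staircases, I would show that each configuration forces an incompatible pattern of ``missing prey'' and ``missing enemy'' among its non-edges. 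Concretely, one first rules out induced cycles of length at least $5$, where the monotonicity of the prey- and enemy-staircases should already produce a chord independently of the $C_4$ hypothesis, and then rules out each asteroidal-triple configuration, this time genuinely using the absence of $C_4$ by showing that the required non-adjacencies would otherwise re-create an induced $4$-cycle.

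The main obstacle is precisely this geometric case analysis. It demands careful bookkeeping of which witness---prey or enemy---is absent for each non-edge of a candidate obstruction, and a proof that the monotone staircase structure renders the resulting pattern self-contradictory unless a $4$-cycle is already present. I expect the asteroidal-triple cases to be the delicate part, since there the two witness types may fail on different non-edges and must be reconciled against the planar order, whereas the long-cycle cases should succumb more directly to staircase monotonicity.

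Finally, for the second assertion I would start from the Cho--Kim realization in Theorem~\ref{thm;DPOIntKim}: given an interval graph $G$ (necessarily free of an induced $C_4$, being chordal), place its vertices together with sufficiently many isolated vertices as a point set $V_0\subseteq\mathbb{R}^2$ whose competition graph is exactly $G$ with those isolated vertices. Adjoining one new point $t$ lying strictly above and to the right of all of $V_0$ leaves every common-prey relation unchanged while making $t$ a common enemy of every pair in $V_0$; thus on $V_0$ the common-enemy condition becomes vacuous and $CCE$ reduces to $C(D)$, namely $G$ with its isolated vertices, while $t$ itself has no enemy and is therefore isolated. This realizes $G$, with sufficiently many isolated vertices, as the competition--common enemy graph of a doubly partial order.
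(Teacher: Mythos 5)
First, a point of orientation: the paper you are working from does not prove this statement at all --- it is quoted from \cite{SJkim} purely as background, so there is no in-paper argument to compare yours against and your proposal must stand on its own. Two of its pieces do stand. The observation that an induced $C_4$ obstructs intervality is immediate (interval graphs are chordal). More substantially, your realization argument for the second assertion is correct and essentially complete modulo Theorem~\ref{thm;DPOIntKim}: adding one point $t$ above and to the right of the Cho--Kim configuration makes the common-enemy condition vacuous for every pair of original points, creates no new common prey (since $t$ is nobody's out-neighbor), and leaves $t$ itself isolated in the CCE graph because it has no in-neighbor. That part I would accept as written.

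The genuine gap is the main implication: that the CCE graph of a DPO with no induced $C_4$ is interval. Everything you offer there is a plan rather than a proof --- ``I would show that each configuration forces an incompatible pattern,'' ``should succumb more directly to staircase monotonicity,'' ``I expect the asteroidal-triple cases to be the delicate part.'' This case analysis is the entire mathematical content of the theorem, and it is precisely what is missing. Two places where it is not even clear the plan can be executed as described: (i) you claim induced cycles of length at least $5$ are excluded ``independently of the $C_4$ hypothesis'' merely because both the competition graph and the common-enemy graph are interval, but the edge-intersection of two interval graphs on a common vertex set can be $C_n$ for every $n\ge 4$ (the boxicity of $C_n$ is $2$), so any such exclusion must exploit the specific coupling between the prey-staircase and the enemy-staircase of a single point set, and you have not identified what that coupling is; (ii) the Lekkerkerker--Boland route forces you, for each non-edge of a candidate asteroidal-triple configuration, to decide which witness (prey or enemy) fails and to reconcile these choices against the planar order, and without a single worked case one cannot judge whether the dichotomy ``no common prey or no common enemy'' is strong enough to close every configuration. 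Until at least the $C_{\ge 5}$ case and one asteroidal-triple case are carried out in full, the hard direction remains unproven.
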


\noindent
The above result on competition-common enemy graphs was generalized
by Lu and Wu~\cite{LuWu} and Wu and Lu~\cite{WuLu}.
Most recently, the niche graph,
the $m$-step competition graph,
and the phylogeny graph
of a doubly partial order were studied.

\begin{Thm}[\cite{Niche2009}]
The niche graph of a doubly partial order is an interval graph
unless it contains a triangle.
\end{Thm}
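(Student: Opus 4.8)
The plan is to prove the nontrivial direction, namely that if the niche graph $N(D)$ of a doubly partial order $D$ is triangle-free then it is an interval graph; the triangle hypothesis is genuinely needed, since a triangle can sit inside a larger non-interval configuration. First I would normalize: fix a realization $\phi$ of $D$ in $\mathbb{R}^2$ and, using the standard fact that a doubly partial order admits a realization with all $2|V(D)|$ coordinates distinct, assume we are in this generic position, so that $D$ --- and hence $N(D)$ --- is unchanged. Recall that $N(D) = C(D) \cup CE(D)$, where a \emph{competition} edge $uv$ records a common prey (a point $\prec u$ and $\prec v$, i.e.\ below-left of both) and a \emph{common-enemy} edge $uv$ records a common predator (a point $\succ u$ and $\succ v$, i.e.\ above-right of both). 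Both summands are interval graphs: $C(D)$ by Theorem~\ref{thm;DPOIntKim}, and $CE(D)$ because it is exactly the competition graph of the reverse digraph, which is again a doubly partial order via the central reflection $v \mapsto -\phi(v)$.

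The key leverage from triangle-freeness is a local bound on the poset. If three points lay strictly below-left of a common point $v$, they would share $v$ as a common predator and hence form a common-enemy triangle; dually, three points strictly above-right of $v$ would share $v$ as a common prey and form a competition triangle. Thus, assuming $N(D)$ is triangle-free, every point has at most two points below-left of it and at most two above-right of it. In particular every $\prec$-chain has at most three elements, so the poset has height at most $3$, and the competition/common-enemy witnesses available at each vertex are severely restricted.

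Next I would reduce interval-ness to excluding a short list of obstructions. A triangle-free chordal graph is a forest, and a forest is an interval graph precisely when each component is a caterpillar; so by the Lekkerkerker--Boland characterization it suffices to show that $N(D)$ contains no induced cycle $C_k$ with $k \ge 4$ and no induced spider $S_{2,2,2}$ (the subdivision of $K_{1,3}$ with legs of length two, equivalently no asteroidal triple). The strategy for each hypothetical configuration is to orient the argument by edge type: label every edge as competition or common-enemy, pick the witnessing prey (below-left) or predator (above-right) for each edge, and then use the at-most-two bounds to force either a chord (contradicting that the cycle is induced) or three pairwise competition edges, or three pairwise common-enemy edges --- in either case a triangle, contradicting triangle-freeness.

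The main obstacle is exactly this last step for the induced cycles of arbitrary length. One must track how the competition/common-enemy labels alternate around the whole cycle and how the corresponding witness points are forced to sit in the plane relative to the cycle's vertices; the delicate part is showing that no consistent cyclic labeling survives the height-at-most-three and at-most-two constraints without producing a forbidden triangle or an unwanted chord. Organizing this case analysis on the cyclic pattern of labels --- rather than any single geometric estimate --- is where the real work lies, with the spider $S_{2,2,2}$ handled by the same witness-placement bookkeeping once the cyclic case is understood.
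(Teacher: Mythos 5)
This theorem is quoted in the paper from \cite{Niche2009} as background; the present paper contains no proof of it, so there is no in-paper argument to measure yours against, and I can only assess the proposal on its own terms. The preliminary steps are sound: in a DPO the out-neighbors of a vertex are exactly the points below-left of it and the in-neighbors are exactly the points above-right of it, so triangle-freeness of the niche graph does force every vertex to have at most two vertices below-left of it and at most two above-right of it (hence every chain has at most three elements); and for a triangle-free graph, being interval is indeed equivalent to having no induced $C_k$ with $k\ge 4$ and no induced spider $S_{2,2,2}$, since a triangle-free chordal graph is a forest and a forest is interval exactly when each component is a caterpillar.

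The gap is that the entire substance of the theorem --- actually excluding the induced $C_k$ ($k\ge 4$) and the induced $S_{2,2,2}$ --- is announced rather than carried out; you say yourself that this is ``where the real work lies'' and supply only a bookkeeping strategy (label each edge as competition or common-enemy, choose a witness per edge, invoke the at-most-two bounds). Nothing in the outline shows that this case analysis closes: the at-most-two bound constrains how many cycle vertices can share a single fixed witness, but an induced $C_k$ comes with up to $k$ distinct witnesses, one per edge, and you give no mechanism forcing distinct witnesses into a configuration that produces a chord or a triangle. A route that plausibly completes the argument, in the spirit of the present paper, is to prove niche-graph analogues of Lemma~\ref{lem;prec} and Lemma~\ref{lem;prec2} for the relation $\searrow$ (here one must treat the prey and enemy cases separately, since $x\prec y$ makes every out-neighbor of $x$ an out-neighbor of $y$ but every in-neighbor of $y$ an in-neighbor of $x$), and then run the transitivity argument of Lemma~\ref{lem;C} to dispose of all induced cycles at once, with a separate short argument for $S_{2,2,2}$. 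As written, however, the proposal is a plan rather than a proof.
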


\begin{Thm}[\cite{PLK:mStepDPO}]
For any positive integer $m$,
the $m$-step competition graph of a doubly partial order
is an interval graph, and
an interval graph with sufficiently many isolated vertices is
the $m$-step competition graph of a doubly partial order.
\end{Thm}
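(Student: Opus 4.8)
The plan is to mirror the strategy Cho and Kim used for the ordinary competition graph, i.e.\ the case $m=1$ recorded in Theorem~\ref{thm;DPOIntKim}, after first reducing the $m$-step adjacency to a purely order-theoretic condition. The first step I would establish is a \emph{chain-length lemma}: in a DPO, if $\phi(w)\prec\phi(u)$ then for every $k$ with $1\le k\le L(u,w)$ there is a directed walk of length exactly $k$ from $u$ to $w$, where $L(u,w)$ denotes the length of a longest such walk. This holds because a longest chain $\phi(w)=z_k\prec\cdots\prec z_1\prec z_0=\phi(u)$ can be shortened one step at a time by deleting an interior point and invoking transitivity of $\prec$. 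Consequently $w$ is an $m$-step prey of $u$ if and only if $\phi(w)\prec\phi(u)$ and $L(u,w)\ge m$; in particular the $m$-step prey set $P_m(u)$ is an order ideal contained in the open lower-left quadrant of $\phi(u)$, and two vertices $u,v$ are adjacent in $C^m(D)$ precisely when $P_m(u)\cap P_m(v)\neq\emptyset$.

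For the forward direction I would construct an explicit interval assignment generalizing Cho and Kim's. Each $P_m(u)$ is a finite lower-left order ideal, determined by a staircase of maximal elements, and $u,v$ are adjacent exactly when some vertex lies in both ideals. Sorting $V$ by first coordinate, I would track for each $u$ the lowest relevant part of the boundary of $P_m(u)$ and encode these ideals by linearly ordered data from which intervals $J(v)$ with the correct overlaps can be read off. For $m=1$ the ideal $P_1(u)$ is simply the whole quadrant below $\phi(u)$, so the existence of a common prey collapses to a single prefix-minimum comparison of second coordinates; the new phenomenon for $m\ge 2$ is that $w\in P_m(u)$ no longer forces $w\in P_m(v)$ even when $\phi(w)\prec\phi(v)$, so the two staircases must be compared more delicately. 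Alternatively one could verify via Lekkerkerker--Boland that $C^m(D)$ contains no induced cycle of length at least four and no asteroidal triple, but the constructive route seems cleaner and also prepares the converse.

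For the converse I would start from an interval representation $\{[l_v,r_v]\}_{v\in V(G)}$ of the given interval graph $G$ and realize $G$, together with additional isolated vertices, as $C^m(D)$ for a suitable DPO $D$. Place the vertices of $G$ as ``high'' points in $\mathbb{R}^2$ whose lower-left quadrants overlap according to the interval overlaps, and, for each competition that must be created, attach a common bottom point $w$ reached from both endpoints by a ladder of $m-1$ auxiliary intermediate points, so that a longest chain down to $w$ from each endpoint has length exactly $m$. The key feature is that every auxiliary point has a longest descending chain of length at most $m-1$ (an intermediate point) or zero (a bottom point), hence possesses no $m$-step prey and is therefore isolated in $C^m(D)$; these auxiliary points are exactly what accounts for the required ``sufficiently many isolated vertices.''

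The step I expect to be the main obstacle is controlling the depth-$m$ condition simultaneously in two dimensions, since $L(u,w)\ge m$ is a genuinely two-dimensional longest-chain quantity depending on the points inside the rectangle between $\phi(w)$ and $\phi(u)$ rather than on a global level function. In the forward direction this means showing that the staircase boundaries of the ideals $P_m(u)$ interact in a laminar-enough way to admit an interval model; in the converse it means positioning the ladders so that the intended pairs acquire a common $m$-step prey while no spurious common $m$-step prey arises among the images of $V(G)$ and no auxiliary point inadvertently gains a descending chain of length $m$. Executing these coordinate placements carefully, rather than the high-level reduction itself, is where the real work lies.
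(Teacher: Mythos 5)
First, a point of comparison: the present paper does not prove this theorem at all --- it is quoted as background from \cite{PLK:mStepDPO} --- so there is no in-paper proof to measure your argument against, and your proposal must stand on its own. On its own terms it is an outline with two genuine gaps, both of which you yourself identify as ``where the real work lies,'' and neither of which is closed.

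Your opening reduction is correct: since a DPO is transitively closed, a chain of length $L$ from $\phi(u)$ down to $\phi(w)$ yields directed walks of every length $1,\ldots,L$, so $w$ is an $m$-step prey of $u$ exactly when the longest chain between them has length at least $m$, the sets $P_m(u)$ are down-sets, and $C^m(D)$ is their intersection graph. The gap is that this framing, by itself, cannot deliver the forward direction. The intersection graph of arbitrary down-sets of a planar point set need not be interval: take the antichain $p_i=(i,5-i)$ for $i=1,2,3,4$ (every subset of an antichain is a down-set) and the four down-sets $\{p_1,p_2\}$, $\{p_2,p_3\}$, $\{p_3,p_4\}$, $\{p_4,p_1\}$; their intersection graph is the $4$-cycle, which is not interval. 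So intervality must come from the specific way each $P_m(u)$ sits inside the open lower-left quadrant of $\phi(u)$ and is cut out by the depth function --- precisely the step you describe only as ``track the lowest relevant part of the boundary'' and hope is ``laminar enough.'' For $m=1$ the sets $P_1(u)$ are the full quadrants and the claim collapses to Theorem~\ref{thm;DPOIntKim}; the entire content of the theorem is the $m\ge 2$ case you postpone.

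The converse has the symmetric problem. In a DPO you cannot choose arcs: every auxiliary point you place becomes an out-neighbor of \emph{every} vertex whose open lower-left quadrant contains it. So a ladder of $m-1$ intermediate points attached to one edge of $G$ may (i) hand its bottom point to a third $G$-vertex as an $m$-step prey, creating a spurious edge, and (ii) sit above points of another ladder, giving an ``intermediate'' point a descending chain of length $m$ and hence making it non-isolated. Excluding both failure modes simultaneously for all edges is the whole proof of the converse, and the proposal defers exactly that. Until explicit coordinates are given and these interactions are ruled out, neither direction is established.
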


\begin{Thm}[\cite{PS:phylo}]
The phylogeny graph of a doubly partial order
is an interval graph.
In addition,
for any interval graph $G$,
there exists an interval graph $\tilde{G}$
such that $\tilde{G}$ contains the graph $G$ as an induced subgraph
and that $\tilde{G}$ is the phylogeny graph of a doubly partial order.
\end{Thm}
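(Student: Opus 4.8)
The plan is to prove the two assertions separately, and the reformulation driving both is that the phylogeny graph $P(D)$ of a DPO is \emph{exactly} the intersection graph of the closed ``down-sets.'' For a point $v=(v_1,v_2)$ write $\bar D(v)=\{w\in V : w_1\le v_1,\ w_2\le v_2\}$ for the set of points of $V$ in the closed lower-left quadrant at $v$. First I would check that for distinct $u,v$ one has $uv\in E(P(D))$ if and only if $\bar D(u)\cap \bar D(v)\neq\emptyset$: a common (strict) prey lies in the intersection, a comparable pair $u\prec v$ has $u\in\bar D(u)\cap\bar D(v)$, and conversely if the closed quadrants meet then they either share $u$ or $v$ (forcing comparability) or a third point (a common prey). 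This single observation packages the two kinds of phylogeny edges, arcs and competitions, into one intersection condition.

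For the first assertion I would build an explicit interval representation off the ``lower staircase.'' Assuming, as we may after a suitable perturbation preserving $\prec$, that all first and all second coordinates are distinct, let $w_1,\dots,w_s$ be the $\prec$-minimal points of $V$ listed by increasing first coordinate; by minimality their second coordinates then strictly decrease, so they form an antichain arranged as a decreasing staircase. To each $v$ assign the set of staircase indices $J(v)=\{c : w_c\preceq v\}$, and I claim $J(v)$ is a contiguous block $[\beta(v),\alpha(v)]$ of $\{1,\dots,s\}$, since the condition $(w_c)_1\le v_1$ cuts out a prefix and $(w_c)_2\le v_2$ a suffix by monotonicity of the staircase. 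Taking the real interval $[\beta(v),\alpha(v)]$ for $v$, I would verify that two such integer intervals meet exactly when $uv\in E(P(D))$: if they meet they share an index $c$, so $w_c\in\bar D(u)\cap\bar D(v)$ and $uv$ is an edge; conversely, since $\bar D(u)\cap\bar D(v)$ is a nonempty down-closed subset of $V$ it contains a $\prec$-minimal point $w_c$, giving $c\in J(u)\cap J(v)$. Minimal vertices, which receive a singleton $\{c\}$, are handled uniformly by the same argument and come out pairwise non-adjacent, as they must.

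For the converse I would run this representation backwards. Start from an interval representation of the given interval graph $G$ with integer endpoints $1\le \ell_g\le r_g\le s$. Place $s$ minimal points on a decreasing staircase, say $w_c=(c,-c)$, and for each vertex $g$ introduce $v_g=(r_g+\tfrac12,\,-\ell_g+\tfrac12)$, which dominates precisely $w_{\ell_g},\dots,w_{r_g}$. A short check shows no $v_g$ lies below-left of any $w_c$, so the $\prec$-minimal points of the resulting set are exactly $w_1,\dots,w_s$; hence the first part applies and assigns $v_g$ the interval $[\ell_g,r_g]$. Thus the subgraph of $P(D)$ induced on $\{v_g\}$ has the same edges as $G$, while $P(D)$ itself is interval by the first part, giving the desired $\tilde G=P(D)$.

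The main obstacle, I expect, is the clean verification in the first part that interval-intersection reproduces $E(P(D))$ with neither spurious nor missing edges; this is exactly where one uses that a nonempty down-closed set contains a $\prec$-minimal point (so a common prey yields a common \emph{minimal} prey) and that comparable pairs give nested blocks, and where the degenerate minimal vertices must be shown to behave correctly. A secondary technical point is the reduction to distinct coordinates: the perturbation must be taken so as not to turn an incomparable pair sharing a coordinate into a comparable one, which forces the direction of each coordinate nudge to be chosen against the order of the other coordinate.
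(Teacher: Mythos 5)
This statement is quoted from \cite{PS:phylo} as background; the paper under review contains no proof of it, so there is no in-text argument to compare yours against. Judged on its own terms, your strategy is essentially sound. The reformulation that, after perturbing to distinct coordinates, $uv$ is an edge of the phylogeny graph if and only if the closed down-sets $\bar{D}(u)$ and $\bar{D}(v)$ meet, combined with the observation that a nonempty down-closed subset of $V$ contains a $\prec$-minimal element of $V$, correctly reduces adjacency to intersection along the minimal staircase, and the map $v\mapsto[\beta(v),\alpha(v)]$ is then a genuine interval representation. The perturbation caveat you raise is real but handled exactly as you describe: break each coordinate tie against the order of the other coordinate, e.g.\ $(x_1,x_2)\mapsto(x_1-\epsilon x_2,\,x_2-\epsilon x_1)$ for small generic $\epsilon>0$, which preserves all strict comparabilities and creates no new ones.

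The one concrete gap is in the converse. You require only integer endpoints $1\le \ell_g\le r_g\le s$, so two vertices of $G$ may receive identical intervals; then $v_g=v_h$ as points of $\mathbb{R}^2$, and the set you construct is not in bijection with the intended vertex set, so $D$ is not a well-defined DPO. Even when the points are distinct, $r_g=r_h$ or $\ell_g=\ell_h$ forces two of the $v_g$ to share a coordinate, violating the genericity hypothesis under which the first part was proved (and under which ``$w\in\bar{D}(u)$ and $w\neq u$'' implies ``$w\prec u$''). The fix is standard: every interval graph on $m$ vertices admits a representation whose $2m$ endpoints are distinct integers in $\{1,\dots,2m\}$; taking $s=2m$ and such a representation, the points $v_g=(r_g+\tfrac{1}{2},\,-\ell_g+\tfrac{1}{2})$ are pairwise distinct, all first and all second coordinates of $\{w_c\}\cup\{v_g\}$ are distinct, and the rest of your verification (no $v_g$ is $\prec$-minimal, $v_g$ dominates exactly $w_{\ell_g},\dots,w_{r_g}$, hence $J(v_g)=[\ell_g,r_g]$ and the induced subhypergraph of $P(D)$ on $\{v_g\}$ is $G$) goes through verbatim. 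With that patch the argument is complete.
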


Continuing in the same spirit,
we study the competition hypergraph of a doubly partial order.
The notion of a competition hypergraph
which is a variant of a competition graph was introduced
by Sonntag and Teichert \cite{CompHyp}.
The {\it competition hypergraph} $C\cH(D)$ of
a digraph $D$ is a hypergraph without loops and multiple hyperedges
such that the vertex set is the same as 
the vertex set of $D$ and $e \subset V(D)$
is a hyperedge if and only if
$e$ contains at least two vertices and
$e$ coincides with the in-neighborhood  of some vertex $v$
in $D$.
As we study the competition hypergraphs of digraphs,
we assume that all hypergraphs considered in this paper
have no loops and no multiple hyperedges.
The notion of a competition hypergraph is considered
as one of the important variants of competition graphs and
significant results on this topic are being obtained
(see \cite{PS:Hyper, CompHyp, Son2, Son3, Son4}).
In this paper,
we classify doubly partial orders
whose competition hypergraphs are interval.

\section{Main Results}

Throughout this section, we follow 
the terminology for hypergraphs given in \cite{voloshin}.
We say that two vertices $u$ and $v$ are \textit{adjacent}
in a hypergraph $\cH$
if there is a hyperedge $e$ in $\cH$ such that $\{u,v\} \subset e$.
For a positive integer $r$,
a hypergraph $\cH$ is called {\it $r$-uniform}
if each hyperedge of the hypergraph $\cH$ has the same size $r$.
Obviously, $2$-uniform hypergraphs are graphs.
A sequence $v_0 v_1 \cdots v_k$ of distinct vertices of
a hypergraph $\cH$
is called a \textit{path}
if there exist $k$ distinct hyperedges $e_1, e_2, \ldots, e_k$
such that $e_i$ contains $\{v_{i-1}, v_i\} $ for each $1 \leq i \leq k$.
A sequence $v_0 v_1 \cdots v_k$ of distinct vertices of a hypergraph $\cH$
is called a \textit{cycle} if there exist $k+1$ distinct hyperedges
$e_1, e_2, \ldots, e_k,e_{k+1}$
such that $e_i$ contains $\{v_{i-1}, v_i\} $ for each
$1 \le i \le k$ and $e_{k+1}$ contains $\{v_0,v_k\}$.
A \textit{subhypergraph} of a hypergraph $\cH$
is a hypergraph $\cH'$ such that $V(\cH')\subseteq V(\cH)$ and
$E(\cH') = \{ e\cap V(\cH')
\mid e\in E(\cH), |e \cap V(\cH')| \geq 2 \}$.
For a vertex $v$ in a digraph $D$,
we denote by $N_D^-(v)$ the in-neighborhood of $v$, i.e.,
$N_D^-(v):=\{u \in V(D) \mid (u,v) \in A(D) \}$.

A hypergraph $\cH$ is \textit{interval}
if there exists a one-to-one function mapping the vertices of $V(\cH)$
to points on the real line such that for each hyperedge $e$,
there exists an interval containing the images of all elements of $e$,
but not the images of any vertices not in $e$.
There is a characterization
of interval hypergraphs by forbidden subhypergraphs:

\begin{Thm}[\cite{Moore}]\label{Thm:forbidden}
A hypergraph $\cH$ is an interval hypergraph if and only if
$\cH$ does not contain any of the hypergraphs in Figure~\ref{fig1}
as a subhypergraph.
\end{Thm}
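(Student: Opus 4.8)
The plan is to recast the interval property in matrix terms and reduce it to the \emph{consecutive ones property} (C1P). First observe that an interval representation of $\cH$ is the same thing as a linear ordering of $V(\cH)$ in which every hyperedge occupies a block of consecutive vertices: given the one-to-one map into $\mathbb{R}$ we order the vertices by their images, and conversely any consecutive ordering lets us choose, for each hyperedge $e$, an interval capturing exactly the images of $e$. Thus $\cH$ is interval if and only if the vertex--hyperedge incidence matrix $M(\cH)$ (rows indexed by $V(\cH)$, columns by the hyperedges) has the C1P, i.e.\ its rows can be permuted so that in every column the $1$'s form a consecutive block.

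For the necessity (``only if'') direction I would first record that the interval property is hereditary with respect to the induced-subhypergraph operation defined above: if $\cH$ has a consecutive ordering and $\cH'$ is the subhypergraph induced on $V'\subseteq V(\cH)$, then the order induced on $V'$ keeps each restricted hyperedge $e\cap V'$ consecutive, so $\cH'$ is again interval. It therefore suffices to check that each individual configuration in Figure~\ref{fig1} is itself non-interval. Each is small and symmetric, so this is a finite verification: one shows directly that no linear ordering of its vertices makes all of its hyperedges simultaneously consecutive.

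The substance lies in the sufficiency (``if'') direction, and this is where I expect the main difficulty. Assuming $\cH$ contains none of the forbidden configurations, I must produce a consecutive ordering of $V(\cH)$. The natural tool is Tucker's classical forbidden-submatrix characterization of the C1P: a $0/1$ matrix has the C1P if and only if it contains none of a fixed list of bad submatrices (three infinite families $M_{\mathrm{I}},M_{\mathrm{II}},M_{\mathrm{III}}$ parametrized by $k$, together with two exceptional matrices $M_{\mathrm{IV}}$ and $M_{\mathrm{V}}$). The remaining work is a translation lemma: deleting vertices realizes deletion of rows of $M(\cH)$, and one must match the Tucker configurations of $M(\cH)$ with the hypergraphs of Figure~\ref{fig1}, so that ``$\cH$ has no forbidden subhypergraph'' becomes ``$M(\cH)$ has no Tucker submatrix,'' whence C1P and an interval representation follow.

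The hard part is precisely this localization step. Tucker's obstruction is obtained by selecting both a subset of rows \emph{and} a subset of columns, whereas the subhypergraph operation here deletes only vertices and retains the restriction of \emph{every} surviving hyperedge; one therefore cannot simply ``forget'' the extra hyperedges. The technical heart is to show that a global failure of the C1P can always be confined to a vertex subset $V'$ whose full induced subhypergraph is exactly one of the listed configurations, which requires a careful, organized case analysis (and bookkeeping over the infinite families in $k$) rather than a one-line argument. An alternative, more self-contained route avoids Tucker entirely via a PQ-tree / Booth--Lueker style incremental construction: one processes the hyperedges one at a time, maintaining the set of all valid partial orderings, and argues that the first hyperedge whose insertion fails forces one of the forbidden induced subhypergraphs to appear. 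Either way, the crux is extracting a specific small obstruction from a global failure of consecutiveness.
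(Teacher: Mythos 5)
The paper offers no proof of this statement: it is imported verbatim from Trotter and Moore \cite{Moore} and used as a black box, so there is no internal argument to compare yours against. On its own terms, your opening reduction is sound: with the paper's definition of ``interval,'' a representation is exactly a linear ordering of $V(\cH)$ in which every hyperedge is consecutive, i.e.\ the consecutive ones property of the incidence matrix, and the configurations of Figure~\ref{fig1} are indeed the hypergraph avatars of Tucker's forbidden submatrices. One small repair is needed in the necessity direction: $\mathcal{C}$, $\mathcal{M}$, $\mathcal{F}$ are infinite families, so ``a finite verification'' does not literally apply; you need an argument parametrized by $n$ (for $C_n$, say, the first vertex in any ordering would have to be immediately followed by both of its cycle-neighbors, which is impossible for $n\ge 3$, and similarly uniform arguments for $M_n$ and $F_n$).

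The genuine gap is the one you flag and then leave unresolved: the localization step in the sufficiency direction. The paper's subhypergraph is determined by a vertex subset $V'$ alone and retains the trace $e\cap V'$ of \emph{every} hyperedge with $|e\cap V'|\ge 2$, so ``contains $M_n$ as a subhypergraph'' requires the induced edge set to be \emph{exactly} $E(M_n)$, with no extraneous traces. Tucker's theorem only delivers a submatrix, i.e.\ a choice of rows \emph{and} columns, and discarding the unchosen columns is precisely what this subhypergraph operation forbids. Proving that any failure of the consecutive ones property can be confined to a vertex set whose full induced subhypergraph is one of the listed configurations is the entire content of the Trotter--Moore theorem beyond Tucker's, and your proposal contains no argument for it --- only the accurate observation that it ``requires a careful, organized case analysis.'' As written this is a plan of attack, not a proof; to complete it you would have to carry out that case analysis (e.g.\ by taking a minimal vertex set on which consecutiveness fails and classifying what it must induce) or execute the PQ-tree alternative in full detail.
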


\begin{figure}
\centering{
\psfrag{A}{$C_3$}
\psfrag{B}{$C_4$}
\psfrag{C}{$C_5$}
\psfrag{D}{$M_1$}
\psfrag{E}{$M_2$}
\psfrag{F}{$M_3$}
\psfrag{G}{$F_1$}
\psfrag{H}{$F_2$}
\psfrag{I}{$F_3$}
\psfrag{J}{$O_1$}
\psfrag{K}{$O_2$}
\includegraphics{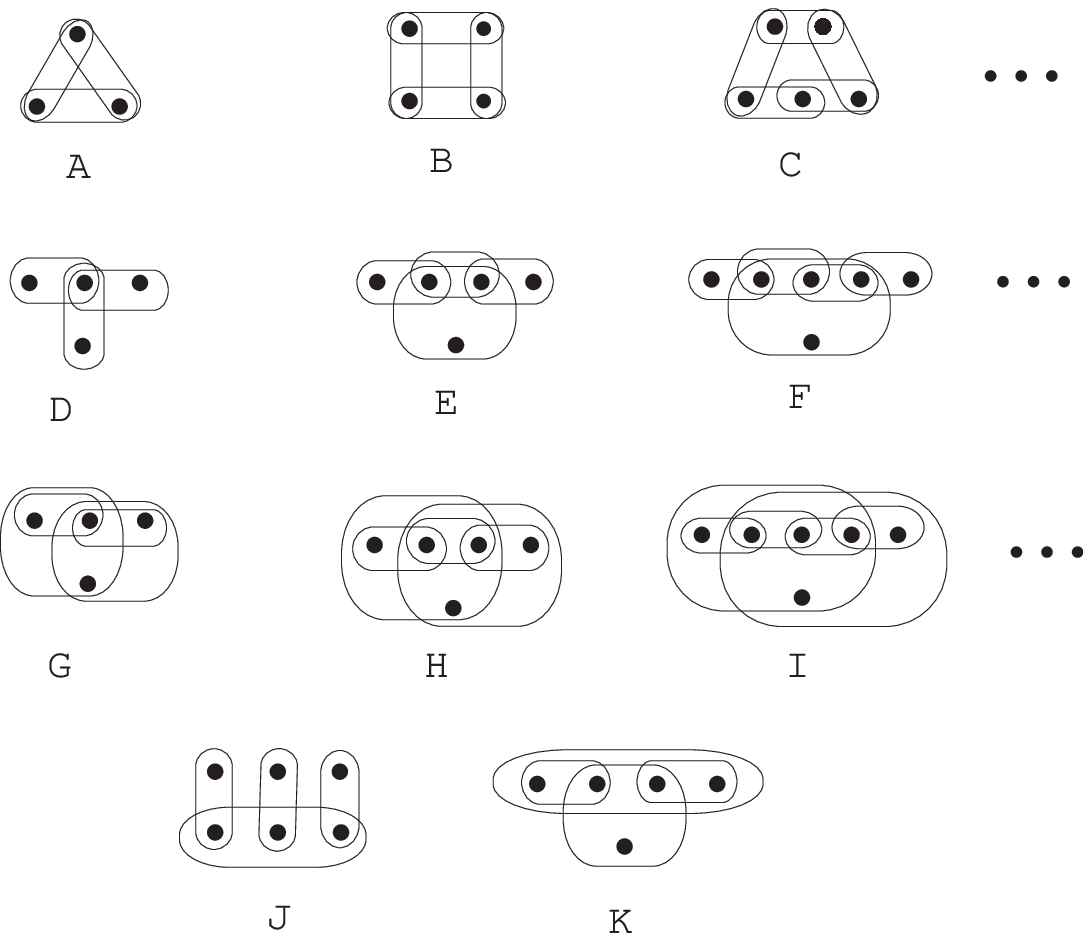}
\caption{Forbidden hypergraphs for interval hypergraphs}
\label{fig1}
}
\end{figure}

\noindent
More precisely, the hypergraphs in Figure~\ref{fig1} are defined as follows:
Given a positive integer $n \geq 3$, let
$C_n$ be the $2$-uniform hypergraph with $n$ vertices which forms a cycle,
and let $\mathcal{C} := \{ C_n \mid n \geq 3 \}$.
For a positive integer $n$,
we define hypergraphs $M_n$ and $F_n$ with $n+3$ vertices by
\begin{eqnarray*}
V(M_n)  &=& V(F_n) \ = \ \{v_1, v_2, \ldots, v_{n+3}\} \ = \ V, \\
E(M_n)  &=& \{  \{v_i,v_{i+1} \} \mid 1\le i \le n+1  \}
\cup \{ V\setminus\{v_1,v_{n+2}\} \}, \\
E(F_n) &=& \left\{  \, \{v_i,v_{i+1} \} \mid 1\le i \le n+1  \}
\cup \{ V\setminus\{v_1\},  V\setminus\{v_{n+2}\}  \,\right\}.
\end{eqnarray*}
Let $\mathcal{M} := \{ M_n \mid n \geq 1 \}$
and $\mathcal{F} := \{ F_n \mid n \geq 1 \}$.
Let $O_1$ be the hypergraph defined by
$V(O_1) = \{ x, x', y, y', z, z' \}$ and
$E(O_1) = \{ \{x, x' \}, \{y, y'\}, \{z, z'\},
\{x, y, z \} \}$,
and let
$O_2$ be the hypergraph defined by
$V(O_2) = \{ x, y, z, w, v \}$ and
$E(O_2) = \{ \{ x, y \}, \{ z, w\},
\{x, y, z, w\}, \{y, z, v \} \}$.
Theorem~\ref{Thm:forbidden} states that a hypergraph $\cH$ being
an interval hypergraph is equivalent to $\cH$ not containing
any of the hypergraphs in
$\mathcal{C} \cup \mathcal{M} \cup \mathcal{F} \cup \{O_1,O_2 \}$
as a subhypergraph.

First, we will show that the competition hypergraph of a DPO
may not be interval.
We will always embed the vertices of a DPO
(as well as the vertices of its competition hypergraph)
into $\mathbb{R}^2$ in a natural way.
For a positive integer $n$, we define
\begin{eqnarray*}
A_n &:=& \{
(i, n-i+1)
\in \mathbb{R}^2 \mid
i \in \{ 0, 1, 2, \ldots, n+1\} \}, \\
B_n &:=& \{
(i - \tfrac{1}{3}, n-i-\tfrac{1}{3}) \in \mathbb{R}^2 \mid
i \in \{ 0, 1, \ldots, n\}  \}.
\end{eqnarray*}
In the DPO defined on the set $A_n\cup B_n$,
two vertices $(i,n+1-i)$ and $(j,n+1-j)$ of $A_n$ with $i<j$
have a common out-neighbor $(i-\frac{1}{3},n-i-\frac{1}{3})$ if $j-i=1$
and have no common out-neighbor if $j-i \ge 2$.
Thus, the competition hypergraph of the DPO defined on the set $A_n\cup B_n$
is a path as a $2$-uniform hypergraph on the $n+2$ vertices in $A_n$
together with the $n+1$ isolated vertices in $B_n$.

\begin{Lem}\label{lem;M}
For a positive integer $n$,
there exists a doubly partial order whose competition hypergraph
contains $M_n$ as a subhypergraph.
\end{Lem}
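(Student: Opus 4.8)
The plan is to augment the doubly partial order on $A_n\cup B_n$ described in the paragraph preceding the lemma by just two extra points, and then to exhibit $M_n$ as the subhypergraph induced on a carefully chosen vertex set. Recall that in the competition hypergraph of a doubly partial order the hyperedges are exactly the sets $N_D^-(v)$ of size at least two, and that $u\in N_D^-(v)$ precisely when $\phi(v)\prec\phi(u)$; identifying vertices with their images in $\mathbb{R}^2$, a hyperedge is therefore the set of all vertices lying strictly to the upper right of some point. As already established, on $A_n\cup B_n$ the hyperedges are the $n+1$ two-element sets $N_D^-(B_i)=\{A_i,A_{i+1}\}$, where $A_i=(i,n+1-i)$ and $B_i=(i-\tfrac13,n-i-\tfrac13)$, and these form the path on the antichain $A_n$.

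To create the large hyperedge of $M_n$ I would add a prey point $q:=(\tfrac13,\tfrac13)$ and one further vertex $q':=(\tfrac12,n-\tfrac12)$, and set $W:=A_n\cup\{q'\}$, which has exactly the $n+3$ vertices of $M_n$. A short computation shows that, among the points of $W$, precisely $A_1,\dots,A_n$ lie strictly to the upper right of $q$, and that $q'$ does as well, so that $N_D^-(q)\cap W=\{A_1,\dots,A_n,q'\}$. Identifying $v_{i+1}$ with $A_i$ for $0\le i\le n+1$ and $v_{n+3}$ with $q'$, this set is exactly $V(M_n)\setminus\{v_1,v_{n+2}\}$, the large hyperedge, while the two-element sets $N_D^-(B_i)\cap W=\{A_i,A_{i+1}\}$ supply the path $v_1v_2\cdots v_{n+2}$.

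The crux is to check that the two new points create no spurious hyperedges and do not enlarge the path hyperedges, so that the induced subhypergraph is \emph{exactly} $M_n$. I would verify three things: first, that $q'$ lies strictly to the upper right of no $B_i$, so each $N_D^-(B_i)$ remains $\{A_i,A_{i+1}\}$ and the path stays $2$-uniform; second, that $A_1$ is the only point of $A_n$ lying strictly to the upper right of $q'$, whence $N_D^-(q')$ has a single element and is not a hyperedge at all, which rules out an unwanted edge such as $\{A_1,\dots,A_n\}$; and third, that $N_D^-(A_j)=\emptyset$ for every $j$, since no point of the configuration dominates a point of the antichain $A_n$. These are the delicate constraints, and I expect them to be the main obstacle: forcing $q'$ into the large hyperedge pushes it up and to the right (it must dominate $q$), whereas keeping the path clean and keeping $N_D^-(q')$ too small to be an edge pulls it back down and left, and the point $q'$ sitting just below and to the left of $A_1$ is what reconciles these competing demands.

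Once these checks are in place, the subhypergraph of $C\cH(D)$ induced on $W$ is precisely $M_n$. Since the chosen coordinates are valid for every $n\ge 1$, this proves the lemma; I would treat the boundary case $n=1$ explicitly, where $M_1$ is a claw and the large hyperedge degenerates to the two-element set $\{A_1,q'\}$, to confirm that the construction does not break down there.
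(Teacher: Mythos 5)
Your construction is correct and follows essentially the same approach as the paper: augment the path configuration $A_n\cup B_n$ by a prey point near the origin whose in-neighborhood supplies the large hyperedge, then pass to the induced subhypergraph on $A_n$ plus one extra vertex. The only difference is that you add a fresh vertex $q'$ to play the role of $v_{n+3}$ instead of reusing a point of $B_n$, which lets one construction cover $n=1$ uniformly (the paper treats that case separately); all the verifications you list do check out.
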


\begin{proof}
For a positive integer $n$, we will define a DPO $D_n$
such that $C\cH(D_n)$ contains $M_n$ as a subhypergraph.
It is easy to check that, for the DPO $D_1$
defined on the set $A_1 \cup B_1 \cup
\{(0,0),(\frac{2}{3},\frac{2}{3})\}$
(see Figure~\ref{fig2}),
$C\cH(D_1)$ contains $M_1$ as a subhypergraph.
For a positive integer $n\ge 2$,
let $D_n$ be the DPO defined by $V(D_n)=A_n \cup B_n \cup \{(0,0)\}$
(see Figure~\ref{fig2_1}).
Then the hyperedges of $C\cH(D_n)$ consist of the hyperedges
of the $2$-uniform path induced by $A_n$
and the hyperedge
$N_{D_n}^-((0,0))= \left(A_n \setminus\{(0,n+1),(n+1,0)\} \right)
\cup \left(B_n\setminus  \left\{ \left(-\frac{1}{3},n-\frac{1}{3}\right),
\left(n-\frac{1}{3},-\frac{1}{3}\right) \right\} \right)$.
Note that
$(\frac{2}{3},n-\frac{4}{3}) \in B_n$ for $n\ge 2$.
Thus,
it is easy to see that the subhypergraph of $C\cH(D_n)$
induced by
$A_n \cup \left\{\left(\frac{2}{3}, n-\frac{4}{3}\right)\right\}$
is isomorphic to $M_n$.
\end{proof}

\begin{figure}[h]
\centering{
\psfrag{A}{$D_1$}
\psfrag{E}{$C\cH(D_1)$}
\includegraphics{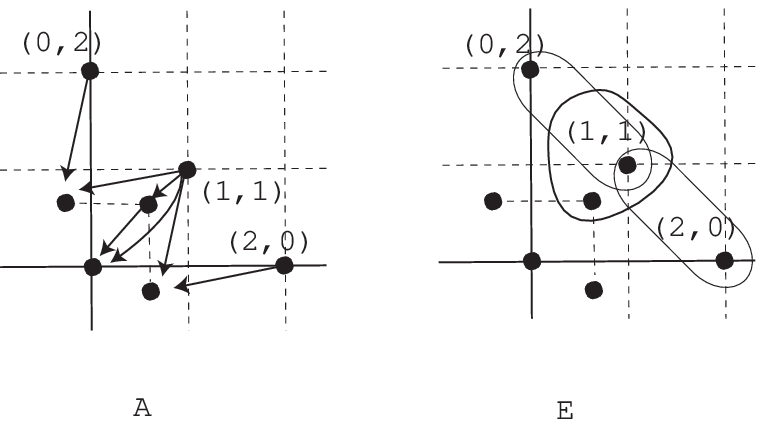}
\caption{The DPO $D_1$ and its competition hypergraph $C\cH(D_1)$}
\label{fig2}
}
\end{figure}

\begin{figure}[h]
\centering{
\psfrag{C}{$D_3$}
\psfrag{G}{$C\cH(D_3)$}
\includegraphics{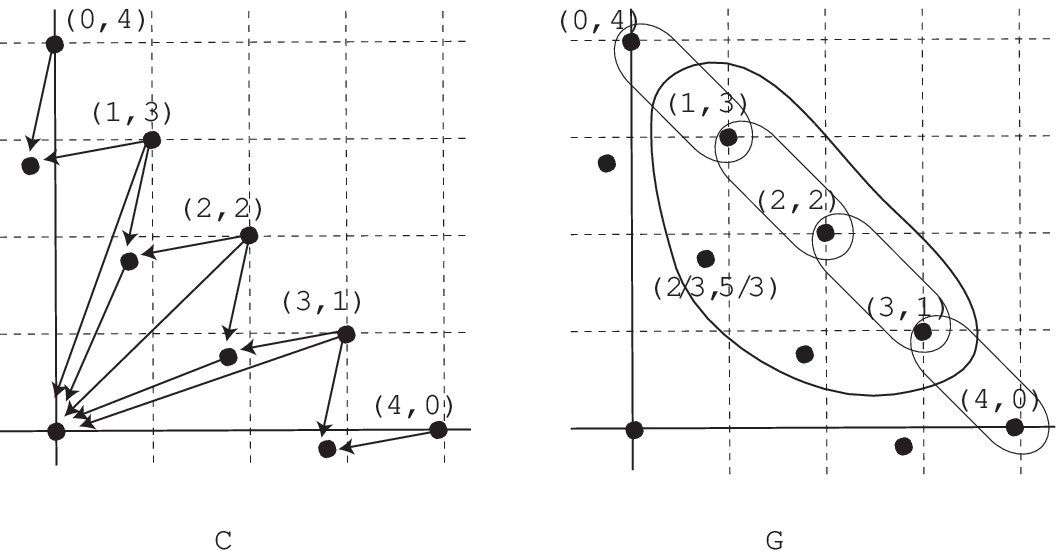}
\caption{The DPO $D_3$ and its competition hypergraph $C\cH(D_3)$}
\label{fig2_1}
}
\end{figure}

\begin{Lem}\label{lem;F}
For a positive integer $n$,
there exists a doubly partial order whose competition hypergraph
contains $F_n$ as a subhypergraph.
\end{Lem}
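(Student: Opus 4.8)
The plan is to imitate the proof of Lemma~\ref{lem;M}. The only difference between $M_n$ and $F_n$ is that $M_n$ carries the single large hyperedge $V\setminus\{v_1,v_{n+2}\}$, whereas $F_n$ carries the two large hyperedges $V\setminus\{v_1\}$ and $V\setminus\{v_{n+2}\}$. In Lemma~\ref{lem;M} the single large hyperedge arose as $N^-_{D_n}((0,0))$, whose strict ``up--right'' cone $\{u\mid (0,0)\prec u\}$ omits exactly the two extreme vertices $(0,n+1)$ and $(n+1,0)$ of the $A_n$-path. Accordingly I would now adjoin \emph{two} points below the line $x+y=n+1$, one whose cone omits only $(0,n+1)$ and one whose cone omits only $(n+1,0)$. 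The natural choices are $(0,-1)$ and $(-1,0)$: the first dominates every point of $A_n$ except $(0,n+1)$, the second every point of $A_n$ except $(n+1,0)$. The $2$-uniform path on $A_n$ with $B_n$ isolated, already described before Lemma~\ref{lem;M}, supplies the edges $\{v_i,v_{i+1}\}$.

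For $n\ge 2$ I would let $D'_n$ be the DPO on $A_n\cup B_n\cup\{(0,-1),(-1,0)\}$ and restrict attention to $S:=A_n\cup\{b_1\}$, where $b_1:=(\tfrac23,n-\tfrac43)\in B_n$. One checks that $(0,-1)\prec b_1$ and $(-1,0)\prec b_1$ (the latter needing $n-\tfrac43>0$, i.e.\ $n\ge 2$), so $b_1$ lies in both of the new in-neighborhoods, while every other point of $B_n$ lies outside $S$ and is deleted by the restriction. Hence $N^-_{D'_n}((0,-1))\cap S$ is all of $S$ except $(0,n+1)$, and $N^-_{D'_n}((-1,0))\cap S$ is all of $S$ except $(n+1,0)$. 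Identifying $(0,n+1),(1,n),\dots,(n+1,0)$ with $v_1,\dots,v_{n+2}$ and $b_1$ with $v_{n+3}$, these two edges become exactly $V\setminus\{v_1\}$ and $V\setminus\{v_{n+2}\}$, so together with the path edges the subhypergraph induced by $S$ is $F_n$.

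The step requiring the most care is checking that the induced subhypergraph has \emph{no extra} edges, and I expect this bookkeeping to be the main obstacle. Two points must be confirmed. First, since no vertex of $A_n\cup B_n$ satisfies $v\prec(0,-1)$ or $v\prec(-1,0)$, neither added point belongs to $N^-_{D'_n}(v)$ for any previously present $v$, so all earlier in-neighborhoods are unchanged and the only new hyperedges are $N^-_{D'_n}((0,-1))$ and $N^-_{D'_n}((-1,0))$. Second, the vertex chosen to play $v_{n+3}$ must have a harmless in-neighborhood, and this is exactly why I take $b_1\in B_n$: its own in-neighborhood $N^-_{D'_n}(b_1)=\{(1,n),(2,n-1)\}$ is already one of the path edges, so $b_1$ spawns no spurious hyperedge and, inside $S$, is adjacent only through the two large hyperedges, precisely the role of $v_{n+3}$ in $F_n$.

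Finally, the case $n=1$ must be treated separately, because then $b_1=(\tfrac23,-\tfrac13)$ has a negative second coordinate and fails $(-1,0)\prec b_1$, so no point of $B_1$ can serve as $v_4$. Here I would instead take the DPO on $A_1\cup B_1\cup\{(0,-1),(-1,0),(\tfrac12,\tfrac12)\}$, use $q:=(\tfrac12,\tfrac12)$ as $v_4$, and restrict to $S:=A_1\cup\{q\}$. Since $(0,-1)\prec q$ and $(-1,0)\prec q$, the point $q$ enters both large hyperedges; since $b_i\not\prec q$ for each $b_i\in B_1$, it stays out of the path edges; and the only remaining worry, a stray edge coming from $N^-(q)$, does not arise because $N^-(q)=\{(1,1)\}$ is a single vertex. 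The resulting induced subhypergraph is $F_1$, which completes the proof.
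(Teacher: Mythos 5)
Your construction for $n\ge 2$ is exactly the paper's $D'_n$ on $A_n\cup B_n\cup\{(0,-1),(-1,0)\}$ with the induced subhypergraph on $A_n\cup\{(\tfrac23,n-\tfrac43)\}$, and your $n=1$ case matches the paper's except for the immaterial choice of $(\tfrac12,\tfrac12)$ in place of $(\tfrac23,\tfrac23)$; all the verifications you carry out are correct. The proposal is correct and takes essentially the same approach as the paper, only spelling out details the paper leaves as ``easy to check.''
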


\begin{proof}
For a positive integer $n$, we will define a DPO $D'_n$
such that $C\cH(D'_n)$ contains $F_n$ as a subhypergraph.
It is easy to check that, for the DPO $D'_1$
defined
on the set $A_1 \cup B_1 \cup
\{(\frac{2}{3},\frac{2}{3}),(-1,0),(0,-1) \}$
(see Figure~\ref{fig4}), $C\cH(D'_1)$ contains $F_1$ as a subhypergraph.
For $n\ge 2$,
let $D'_n$ be the DPO $D'_n$ defined by
$V(D'_n)= A_n\cup B_n\cup \{ (0,-1), (-1,0) \}$
(see Figure~\ref{fig4_1}).
Then the hyperedges of $C\cH(D'_n)$ consist of the hyperedges
of the 2-uniform path induced by $A_n$ and
the hyperedges
$N_{D'_n}^-((-1,0)) = \left(A_n \setminus\{(n+1,0)\} \right)
\cup \left( B_n\setminus
\left\{ \left(n-\frac{1}{3},-\frac{1}{3}\right) \right\} \right)$
and $N_{D'_n}^-((0,-1))=\left(A_n \setminus\{(0,n+1)\} \right) \cup
\left( B_n \setminus \left\{ \left( -\frac{1}{3}, n - \frac{1}{3} \right)
\right\} \right)$.
Note that $( \frac{2}{3},n-\frac{4}{3} ) \in B_n$ for $n\ge 2$.
Thus,
it is easy to see that
the subhypergraph of $C\cH(D'_n)$ induced by
$A_n\cup \{ (\frac{2}{3},n-\frac{4}{3} ) \}$ is isomorphic to $F_n$.
\end{proof}

\begin{figure}[h]
\centering{
\psfrag{A}{$D'_1$}
\psfrag{B}{$C\cH(D'_1)$}
\includegraphics{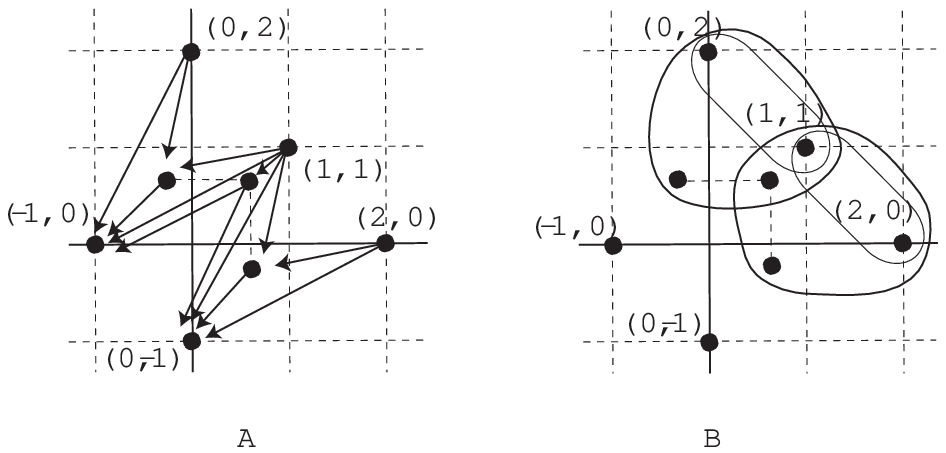}
\caption{The DPO $D'_1$ and its competition hypergraph $C\cH(D'_1)$}
\label{fig4}
}
\end{figure}

\begin{figure}[h]
\centering{
\psfrag{C}{$D'_3$}
\psfrag{D}{$C\cH(D'_3)$}
\includegraphics{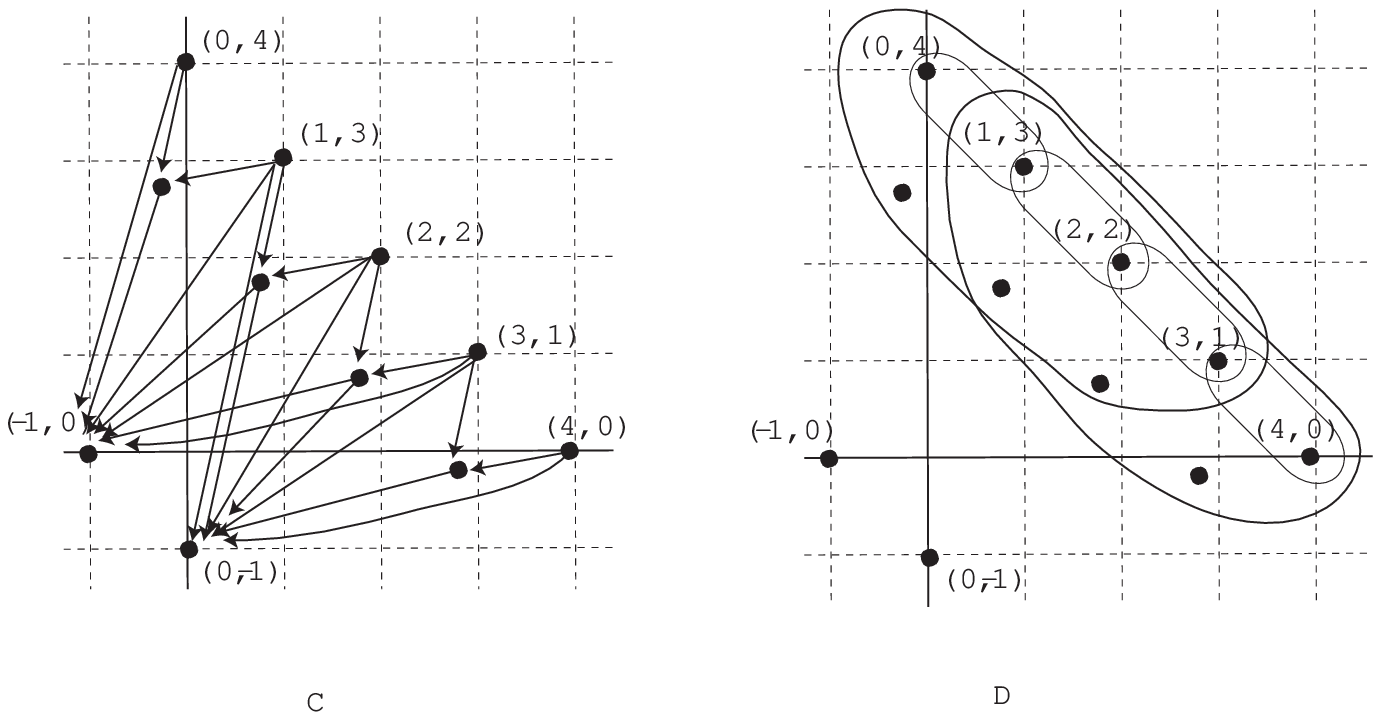}
\caption{The DPO $D'_3$ and its competition hypergraph $C\cH(D'_3)$}
\label{fig4_1}
}
\end{figure}

Lemmas~\ref{lem;M} and~\ref{lem;F} show that a hypergraph
isomorphic to an element in $\mathcal{M} \cup \mathcal{F}$
is realizable as a subhypergraph of the competition hypergraph of a DPO.

\begin{Thm}\label{main;interval}
For each hypergraph $\cH$ in $\mathcal{M} \cup \mathcal{F}$,
there exists a doubly partial order whose competition hypergraph
contains $\cH$ as a subhypergraph.
\end{Thm}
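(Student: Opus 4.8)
The plan is to recognize that Theorem~\ref{main;interval} is an immediate consequence of Lemmas~\ref{lem;M} and~\ref{lem;F}, both of which have already been established. By the definitions given in the excerpt, $\mathcal{M} = \{ M_n \mid n \geq 1 \}$ and $\mathcal{F} = \{ F_n \mid n \geq 1 \}$, so every hypergraph $\cH$ in $\mathcal{M} \cup \mathcal{F}$ is equal to $M_n$ for some positive integer $n$ or to $F_n$ for some positive integer $n$. The whole argument therefore reduces to a two-case distinction followed by an invocation of the appropriate lemma.

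First I would fix an arbitrary $\cH \in \mathcal{M} \cup \mathcal{F}$ and split according to which family it belongs to. If $\cH = M_n$ for some $n \geq 1$, then Lemma~\ref{lem;M} provides a doubly partial order (namely the DPO $D_n$ constructed there) whose competition hypergraph contains $M_n$, hence $\cH$, as a subhypergraph. If instead $\cH = F_n$ for some $n \geq 1$, then Lemma~\ref{lem;F} supplies a doubly partial order (the DPO $D'_n$ constructed there) whose competition hypergraph contains $F_n$, hence $\cH$, as a subhypergraph. In either case the desired doubly partial order exists, which is precisely the assertion of the theorem.

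Since Lemmas~\ref{lem;M} and~\ref{lem;F} have already carried out all the substantive work—exhibiting explicit finite point sets in $\mathbb{R}^2$, reading off the in-neighborhoods that generate the hyperedges, and verifying that the induced subhypergraphs are isomorphic to $M_n$ and $F_n$ respectively—there is no genuine technical obstacle remaining. The theorem is essentially a packaging statement that consolidates the two lemmas under the single label $\mathcal{M} \cup \mathcal{F}$, so I expect the proof to amount to the one-line case analysis described above.
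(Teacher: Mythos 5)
Your proposal is correct and matches the paper exactly: the paper presents Theorem~\ref{main;interval} as an immediate consequence of Lemmas~\ref{lem;M} and~\ref{lem;F}, with no further argument beyond the observation that every member of $\mathcal{M} \cup \mathcal{F}$ is some $M_n$ or some $F_n$. Your two-case invocation of the lemmas is precisely the intended justification.
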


Now it is natural to ask whether the family $\mathcal{M} \cup \mathcal{F}$
contains all the forbidden subhypergraphs for
the competition hypergraph of a DPO being interval.
The answer is yes, as it will be shown 
in the remainder of this paper.

For the sake of simplicity, we define
an irreflexive and transitive relation $\searrow$
on $\mathbb{R}^2$ as follows:
For $x,y \in \mathbb{R}^2$ with
$x=(x_1,x_2)$ and $y=(y_1,y_2)$,
\[
x \searrow y \iff
x \neq y, \ x_1 \le y_1, \text{ and } x_2 \ge y_2.
\]
Obviously, for $x$, $y\in \mathbb{R}^2$ with $x\neq y$,
\begin{align*}
& x \not\prec y \mbox{ and } y \not\prec x
\mbox{ if and only if }
x \searrow y \mbox{ or } y \searrow x.
\tag{$\dagger$}
\end{align*}
The following lemmas are simple but useful:

\begin{Lem}\label{lem;prec}
For vertices $x$ and $y$ of a doubly partial order $D$,
if the competition hypergraph of $D$ has two hyperedges $e_x$ and $e_y$
such that $x\in e_x$, $y\not\in e_x$, $y\in e_y$, $x\not\in e_y$,
then either $x\searrow y$ or $y\searrow x$.
\end{Lem}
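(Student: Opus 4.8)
The plan is to reduce everything to the equivalence $(\dagger)$ together with the transitivity of $\prec$. First I would unwind the definitions. Each hyperedge of $C\cH(D)$ is, by definition, the in-neighborhood of some vertex, so I would write $e_x = N_D^-(s)$ and $e_y = N_D^-(t)$ for suitable vertices $s$ and $t$ of $D$. Identifying each vertex with its image in $\mathbb{R}^2$ under the natural embedding, the definition of a DPO gives $N_D^-(v) = \{ u \mid v \prec u \}$; that is, a hyperedge is exactly the set of vertices that dominate the chosen vertex in the order $\prec$. Thus the four hypotheses translate into $s \prec x$ (from $x \in e_x$), $s \not\prec y$ (from $y \notin e_x$), $t \prec y$ (from $y \in e_y$), and $t \not\prec x$ (from $x \notin e_y$).

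By the equivalence $(\dagger)$, to conclude that $x \searrow y$ or $y \searrow x$ it suffices to verify the three conditions $x \neq y$, $x \not\prec y$, and $y \not\prec x$. The first is immediate: $x \in e_x$ while $y \notin e_x$, so $x$ and $y$ are distinct vertices. The remaining two are symmetric applications of transitivity, which is the whole content of the argument. Suppose, for contradiction, that $x \prec y$. Combining this with $s \prec x$ and using transitivity of $\prec$ gives $s \prec y$, i.e.\ $y \in e_x$, contradicting $y \notin e_x$; hence $x \not\prec y$. Interchanging the roles of $x$ and $y$ (and of $s$ and $t$): if $y \prec x$, then $t \prec y$ together with transitivity yields $t \prec x$, i.e.\ $x \in e_y$, contradicting $x \notin e_y$; hence $y \not\prec x$. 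Applying $(\dagger)$ to the distinct points $x$ and $y$ then finishes the proof.

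There is no serious obstacle here; the lemma is a formal consequence of transitivity. The only point that needs care is getting the orientation right when translating the hypotheses, namely recognizing that an in-neighborhood $N_D^-(v)$ in a DPO is the ``up-set'' $\{u \mid v \prec u\}$ rather than a ``down-set,'' so that the chains $s \prec x \prec y$ and $t \prec y \prec x$ close up correctly to place $y$ in $e_x$ and $x$ in $e_y$, respectively.
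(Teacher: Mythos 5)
Your proof is correct and is essentially the paper's argument: both reduce the claim to $(\dagger)$ and derive a contradiction from $x \prec y$ (resp.\ $y \prec x$) via transitivity of $\prec$, the only difference being that you verify the hypotheses of $(\dagger)$ directly while the paper argues by contraposition. Your explicit identification of a hyperedge as an up-set $N_D^-(v) = \{u \mid v \prec u\}$ matches the paper's phrasing in terms of out-neighbors.
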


\begin{proof}
To reach a contradiction,
suppose that $x \not\searrow y$ and $y\not\searrow x$.
Then, by ($\dagger$), either $x \prec y$ or $y\prec x$.
If $x \prec y$, then any out-neighbor of $x$ is also an out-neighbor of $y$.
Therefore, any hyperedge containing $x$ contains $y$,
which is a contradiction to the existence of the hyperedge $e_x$.
Similarly, we can reach a contradiction if $y\prec x$.
Thus $x\searrow y$ or $y\searrow x$.
\end{proof}

\begin{Lem}\label{lem;prec2}
For vertices $x$, $y$ and $z$ of a doubly partial order $D$,
if $x \searrow y \searrow z$ then
$y$ and all of its in-neighbors are contained in any hyperedge
containing $x$ and $z$ in the competition hypergraph of $D$.
\end{Lem}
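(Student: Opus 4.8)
The plan is to use the single structural fact that every hyperedge of $C\cH(D)$ is, by definition, the in-neighborhood $N_D^-(w)$ of some vertex $w$, and then to reduce the claim to a short chase of coordinate inequalities. Identifying each vertex with its point in $\mathbb{R}^2$, recall that in a DPO one has $u \in N_D^-(w)$ if and only if $w \prec u$, that is, $w_1 < u_1$ and $w_2 < u_2$.

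First I would fix a hyperedge $e$ of $C\cH(D)$ with $x, z \in e$ and write $e = N_D^-(w)$ for a suitable vertex $w$. The memberships $x \in e$ and $z \in e$ then read $w \prec x$ and $w \prec z$, which unpack into the strict inequalities $w_1 < x_1$, $w_2 < x_2$, $w_1 < z_1$, and $w_2 < z_2$. On the other side, unfolding the hypothesis $x \searrow y \searrow z$ by the definition of $\searrow$ yields the chains $x_1 \le y_1 \le z_1$ and $x_2 \ge y_2 \ge z_2$.

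The crux is that the two coordinates are controlled by the two different endpoints: combining $w_1 < x_1$ with $x_1 \le y_1$ gives $w_1 < y_1$, while combining $w_2 < z_2$ with $z_2 \le y_2$ gives $w_2 < y_2$. Hence $w \prec y$, so $y \in N_D^-(w) = e$. For the in-neighbors of $y$, I would note that any $u \in N_D^-(y)$ satisfies $y \prec u$; since $w \prec y$ has just been established and $\prec$ is transitive, $w \prec u$ follows at once, so $u \in N_D^-(w) = e$. This yields both $y \in e$ and $N_D^-(y) \subseteq e$, as required.

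There is no serious obstacle here: the whole argument is a brief inequality chase once one records that hyperedges are in-neighborhoods. The only point worth isolating is the observation driving the third paragraph, namely that the relation $\searrow$ supplies exactly one inequality in each coordinate, so that the bound on the first coordinate of $y$ is inherited from $x$ and the bound on the second from $z$. This is precisely why the statement needs both $x$ and $z$ to lie in $e$ rather than just one of them: knowing only $w \prec x$ would bound $w_2$ against $x_2$, which is useless since $x_2 \ge y_2$, and symmetrically for $z$.
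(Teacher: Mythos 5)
Your proof is correct and follows essentially the same route as the paper's: write the hyperedge as an in-neighborhood $N_D^-(w)$, derive $w \prec y$ by taking the first-coordinate bound from $x$ and the second-coordinate bound from $z$, and then use transitivity of $\prec$ for the in-neighbors of $y$. The paper phrases the coordinate step via $\min\{x_1,z_1\} \le y_1$ and $\min\{x_2,z_2\} \le y_2$, but this is the same inequality chase.
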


\begin{proof}
Let $e$ be a hyperedge containing $x$ and $z$ in $C\cH(D)$.
Then there exists a vertex $a$ such that $N_D^-(a)=e$.
Let $x=(x_1,x_2)$, $y=(y_1,y_2)$, $z=(z_1,z_2)$, and $a=(a_1,a_2)$.
Since $a\prec x$ and $a\prec z$,  $a_1 < \min\{x_1,z_1\}$
and $a_2 < \min\{x_2,z_2\}$.
Since  $x \searrow y \searrow z$, $x_1 \le y_1 \le z_1$
and $x_2 \ge y_2 \ge z_2$, which implies that
$\min\{x_1,z_1\} \le y_1$ and $\min\{x_2,z_2\} \le y_2$.
Therefore $a\prec y$ and thus $y\in e$.
Since $y\preceq u$ for any in-neighbor $u$ of $y$,
$a \prec u$
and hence $u\in e$.
\end{proof}

\begin{Lem}\label{lem;prec3}
For vertices $x$, $y$, $z$ of a doubly partial order $D$,
if $x \searrow y$, $z \prec x$, and $z \not\prec y$,
then $z \searrow y$.
\end{Lem}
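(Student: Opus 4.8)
The plan is to translate every hypothesis and the desired conclusion into coordinate inequalities and then finish by a one-step analysis of the negated relation $z \not\prec y$. Writing $x = (x_1, x_2)$, $y = (y_1, y_2)$, and $z = (z_1, z_2)$, the hypothesis $z \prec x$ unfolds to $z_1 < x_1$ and $z_2 < x_2$, while $x \searrow y$ gives $x_1 \le y_1$ and $x_2 \ge y_2$ (together with $x \neq y$). The conclusion $z \searrow y$ that I want to reach amounts to establishing the three facts $z_1 \le y_1$, $z_2 \ge y_2$, and $z \neq y$.

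First I would chain the first coordinates: from $z_1 < x_1$ and $x_1 \le y_1$ I obtain $z_1 < y_1$. This strict inequality already delivers two of the three required facts at once, namely $z_1 \le y_1$ and, since the inequality is strict, $z \neq y$. It then remains only to produce the second-coordinate inequality $z_2 \ge y_2$.

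This is where the remaining hypothesis enters. Expanding $z \not\prec y$ by De Morgan gives $z_1 \ge y_1$ or $z_2 \ge y_2$. Since the previous step shows $z_1 < y_1$ strictly, the first alternative is impossible, which forces the second: $z_2 \ge y_2$. This is exactly the missing coordinate inequality, and combining it with $z_1 \le y_1$ and $z \neq y$ yields $z \searrow y$, as desired.

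Since the argument is essentially a short coordinate computation, I do not anticipate a genuine obstacle. The only point requiring a little care is reading the negation $z \not\prec y$ correctly as a disjunction and observing that the strict inequality $z_1 < y_1$ eliminates one of the two disjuncts. As a sanity check, it is worth noting that the conditions $x_2 \ge y_2$ and $z_2 < x_2$ are never used; the entire conclusion follows from $z_1 < x_1$, $x_1 \le y_1$, and the hypothesis $z \not\prec y$.
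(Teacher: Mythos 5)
Your proof is correct and takes essentially the same coordinate-chasing route as the paper: both arguments chain $z_1 < x_1 \le y_1$ to get $z_1 < y_1$ and then use $z \not\prec y$ to extract $z_2 \ge y_2$. The only cosmetic difference is that the paper invokes the dichotomy ($\dagger$) and eliminates $y \searrow z$ by contradiction, whereas you negate $z \prec y$ directly as a disjunction; your closing observation that $x_2 \ge y_2$ and $z_2 < x_2$ are never needed is likewise consistent with the paper's proof.
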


\begin{proof}
Let $x=(x_1,x_2)$, $y=(y_1,y_2)$, and $z=(z_1,z_2)$.
Since $x \searrow y$, $x_1\le y_1$.
Since $z \prec x$, $z_1 \leq x_1$.
Therefore $z_1 \leq y_1$ and so $y \not\prec z$.
Since $y \not\prec z$ and $z\not\prec y$, either $z \searrow y$
or $y \searrow z$ by ($\dagger$).
If $y \searrow z$, then $y_1 \le z_1$ and so $x_1 \le z_1$,
a contradiction to the fact that $z \prec x$.
Thus $y \not\searrow z$ and so $z \searrow y$.
\end{proof}

\begin{Lem}\label{lem;prec4}
For vertices $x$, $y$, $z$ of a doubly partial order $D$,
if $x \searrow y$, $z \prec y$, and $z \not\prec x$,
then $x \searrow z$.
\end{Lem}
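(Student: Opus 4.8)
The plan is to mirror the proof of Lemma~\ref{lem;prec3} almost verbatim, but working with the \emph{second} coordinate in place of the first. Writing $x=(x_1,x_2)$, $y=(y_1,y_2)$, and $z=(z_1,z_2)$, I would first extract the relevant coordinate inequalities from the hypotheses: $x \searrow y$ gives $x_2 \ge y_2$, and $z \prec y$ gives $z_2 < y_2$. Chaining these yields the strict inequality $z_2 < x_2$, which in particular rules out $x \prec z$, since $x \prec z$ would require $x_2 < z_2$. Hence both $x \not\prec z$ and, by hypothesis, $z \not\prec x$ hold.

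With the two ``$\not\prec$'' relations in hand, the second step is to invoke ($\dagger$), which forces exactly one of $x \searrow z$ or $z \searrow x$. To finish, I would eliminate the second alternative: if $z \searrow x$ held, then by definition $z_2 \ge x_2$, contradicting the strict inequality $z_2 < x_2$ established above. Therefore $x \not\searrow z$ fails and $z \searrow x$ fails, leaving $x \searrow z$, as desired. (Note that $z_2 < x_2$ also guarantees $x \neq z$, so the non-degeneracy clause in the definition of $\searrow$ is automatic and needs no separate verification.)

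I do not expect a genuine obstacle here, as the statement is essentially the mirror image of Lemma~\ref{lem;prec3} under the symmetry that swaps the two coordinates and reverses the inequalities, with ($\dagger$) supplying the one nontrivial dichotomy. The only point requiring a little care is choosing the correct coordinate on which to derive the strict inequality: the conclusion $x \searrow z$ constrains $x_2 \ge z_2$, so it is precisely the second-coordinate chain $x_2 \ge y_2 > z_2$ that simultaneously blocks $x \prec z$ and contradicts $z \searrow x$. Using the first coordinate instead would not close the argument, since $x \searrow y$ and $z \prec y$ give $x_1 \le y_1$ and $z_1 < y_1$, which need not be comparable.
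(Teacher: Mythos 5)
Your proof is correct. The key inequality in both arguments is the same chain $z_2 < y_2 \le x_2$, but the paper finishes more directly: it unpacks the hypothesis $z \not\prec x$ into the disjunction ``$z_1 \ge x_1$ or $z_2 \ge x_2$,'' uses $z_2 < x_2$ to eliminate the second disjunct, and reads off $x \searrow z$ from the resulting coordinate inequalities $x_1 \le z_1$ and $x_2 > z_2$ --- it never invokes $(\dagger)$ in this lemma. You instead mirror the structure of the proof of Lemma~\ref{lem;prec3}: establish $x \not\prec z$ from $z_2 < x_2$, apply $(\dagger)$ to get the dichotomy $x \searrow z$ or $z \searrow x$, and rule out the latter. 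The two routes are logically equivalent and of essentially the same length; the paper's version is marginally more economical since it avoids the appeal to $(\dagger)$, while yours has the virtue of making Lemmas~\ref{lem;prec3} and~\ref{lem;prec4} visibly parallel. Your closing remark about which coordinate to chain is exactly right, and your observation that $z_2 < x_2$ already gives $x \ne z$ correctly disposes of the non-degeneracy condition in the definition of $\searrow$.
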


\begin{proof}
Let $x=(x_1,x_2)$, $y=(y_1,y_2)$, and $z=(z_1,z_2)$.
Since $x \searrow y$, $x_2 \ge y_2$.
Since $z \prec y$, $z_2 < y_2$.  Therefore $z_2 < x_2$.
Since $z \not\prec x$, $z_1 \ge x_1$ or $z_2  \ge  x_2$.
Since $z_2 < x_2$, $z_1 \ge x_1$.  Thus $x \searrow z$.
\end{proof}

Using Lemmas~\ref{lem;prec}, \ref{lem;prec2}, and \ref{lem;prec3},
we can show that the competition hypergraph of a DPO does not contain
any element of $\mathcal{C}\cup\{ O_1,O_2\}$ as  a subhypergraph.

\begin{Lem}\label{lem;C}
The competition hypergraph of a doubly partial order does not contain
any hypergraph in $\mathcal{C}$ as a subhypergraph.
\end{Lem}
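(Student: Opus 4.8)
The plan is to argue by contradiction. Suppose $C\cH(D)$ contains some $C_n$ with $n\ge 3$ as a subhypergraph, realized on a vertex set $S=\{v_0,v_1,\dots,v_{n-1}\}$ whose induced subhypergraph is the cycle $v_0v_1\cdots v_{n-1}v_0$. First I would unwind the definition of the induced subhypergraph: for each edge $\{v_i,v_{i+1}\}$ of the cycle there is a hyperedge $e$ of $C\cH(D)$ with $e\cap S=\{v_i,v_{i+1}\}$, and conversely every hyperedge of $C\cH(D)$ meeting $S$ in at least two vertices must restrict to one of these consecutive pairs. In particular, a hyperedge realizing a cycle edge contains exactly that pair among the vertices of $S$.

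Next I would show that the vertices of $S$ are pairwise comparable under $\searrow$. Fix distinct $u,w\in S$. Since $n\ge 3$, the vertex $u$ lies on two distinct cycle edges, and $w$ can be an endpoint of at most one of them; hence there is a cycle edge incident with $u$ that does not contain $w$, and the hyperedge realizing it contains $u$ but not $w$. Symmetrically, there is a hyperedge containing $w$ but not $u$. Lemma~\ref{lem;prec} then yields $u\searrow w$ or $w\searrow u$. Because $\searrow$ is irreflexive and transitive, it therefore linearly orders $S$, so I may relabel the vertices as $w_0\searrow w_1\searrow\cdots\searrow w_{n-1}$.

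The crucial step then uses Lemma~\ref{lem;prec2}: I claim every edge of the cycle joins two $\searrow$-consecutive vertices. Indeed, suppose some cycle edge were $\{w_p,w_q\}$ with $q\ge p+2$. Transitivity gives $w_p\searrow w_{p+1}\searrow w_q$, so Lemma~\ref{lem;prec2} forces $w_{p+1}$ into the hyperedge $e$ realizing $\{w_p,w_q\}$; but $e\cap S=\{w_p,w_q\}$ while $w_{p+1}\in S\setminus\{w_p,w_q\}$, a contradiction. Hence each of the $n$ distinct cycle edges must be one of the pairs $\{w_k,w_{k+1}\}$ with $0\le k\le n-2$. This is where the contradiction crystallizes: there are only $n-1$ such consecutive pairs (they form the path $w_0w_1\cdots w_{n-1}$, which contains no cycle), so the $n$ distinct edges of $C_n$ cannot all be accommodated.

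The main obstacle I anticipate is the second step, namely verifying that \emph{every} pair of vertices of $S$, including adjacent ones, is $\searrow$-comparable: Lemma~\ref{lem;prec} requires a hyperedge separating the two vertices in each direction, and producing these hyperedges relies on the fact that each vertex sits on two distinct cycle edges, which is valid precisely because $n\ge 3$. Once total comparability under $\searrow$ is established, Lemma~\ref{lem;prec2} together with the elementary path-versus-cycle counting finishes the argument quickly.
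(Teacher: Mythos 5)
Your proof is correct and follows essentially the same route as the paper: both establish pairwise $\searrow$-comparability of the cycle vertices via Lemma~\ref{lem;prec} (using the fact that each vertex lies on two distinct cycle edges) and then invoke Lemma~\ref{lem;prec2} to force intermediate vertices into hyperedges. The only difference is in the final step --- the paper walks around the cycle to derive $v_1\searrow v_2\searrow\cdots\searrow v_n\searrow v_1$ and contradicts irreflexivity, whereas you linearize first and finish with a pigeonhole count of the $n-1$ consecutive pairs against the $n$ distinct cycle edges, which is an equally valid and arguably cleaner conclusion.
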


\begin{proof}
We prove the lemma by contradiction.
Suppose that there exists a DPO $D$
such that $C\cH(D)$ contains $C_n$ as a subhypergraph for some $n\ge 3$.
Let $v_1v_2\cdots v_n$ be the vertices of $C_n$ such that $v_i$, $v_{i+1}$
are adjacent for all $1\le i \le n$, where the subscripts are
reduced modulo $n$.
Note that for any distinct $i$, $j$ in $\{1, 2, \ldots, n\}$,
there exists a hyperedge containing $v_{i}$ but not $v_j$.
Thus, by Lemma~\ref{lem;prec}, $v_i \searrow v_j$ or $v_j \searrow v_i$
for any distinct $i$, $j$ in $\{1, 2, \ldots, n\}$.
Without loss of generality, we may assume that $v_1\searrow v_2$.
If $v_3 \searrow v_1$, then $v_3 \searrow v_1 \searrow v_2$ and so,
by Lemma~\ref{lem;prec2}, $v_1$ and $v_3$ are adjacent, a contradiction.
Thus $v_1 \searrow v_3$.
Suppose that $v_3 \searrow v_2$.
Then $v_1 \searrow v_3 \searrow v_2$,
and by the argument in the proof of Lemma~\ref{lem;prec2},
a common out-neighbor of $v_1$ and $v_2$ is also an out-neighbor of $v_3$.
This implies that $C_n$ contains a hyperedge containing $\{v_1,v_2,v_3\}$,
which contradicts that $C_n$ is $2$-uniform.
Therefore, $v_2 \searrow v_3$.
By applying a similar argument,
we can claim that $v_3 \searrow v_4$.
By continuing this argument, we obtain
$v_1\searrow v_2 \searrow \cdots \searrow v_{n-1}
\searrow v_n \searrow v_1$.
Since the relation $\searrow$ is transitive,
we have $v_1 \searrow v_1$,
which is a contradiction to the irreflexivity of $\searrow$.
\end{proof}

\begin{Lem}\label{lem;O}
The competition hypergraph of a doubly partial order
does not contain $O_1$ or $O_2$ as a subhypergraph.
\end{Lem}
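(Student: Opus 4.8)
The plan is to argue by contradiction, using only two of the preceding lemmas: Lemma~\ref{lem;prec}, which turns a pair of ``separating'' hyperedges into a $\searrow$-comparability, and Lemma~\ref{lem;prec2}, which states that the middle term of a $\searrow$-chain lies in every hyperedge containing the two ends. Every contradiction I produce will fit one template: exhibit a chain $p\searrow q\searrow r$ together with a hyperedge of $C\cH(D)$ that contains $p$ and $r$ but is forced (by the prescribed restriction of a subhypergraph edge) to omit $q$; Lemma~\ref{lem;prec2} then puts $q$ in that hyperedge, a contradiction.

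For $O_1$ on $\{x,x',y,y',z,z'\}$, let $f_x,f_y,f_z$ be the hyperedges of $C\cH(D)$ restricting to $\{x,x'\},\{y,y'\},\{z,z'\}$ and let $E$ be the one restricting to $\{x,y,z\}$. Each of $f_x,f_y,f_z$ meets $\{x,y,z\}$ in exactly one vertex, so Lemma~\ref{lem;prec} makes $x,y,z$ pairwise $\searrow$-comparable; since $\searrow$ is a strict partial order they form a chain, and after relabelling I may assume $x\searrow y\searrow z$. Now $f_y$ contains $y,y'$ but neither $x$ nor $z$; comparing $y'$ with $x$ and with $z$ (again by Lemma~\ref{lem;prec}, via $f_x$ and $f_z$) gives two dichotomies. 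If $z\searrow y'$, then $y\searrow z\searrow y'$ with $f_y\supseteq\{y,y'\}$ forces $z\in f_y$; if $y'\searrow x$, then $y'\searrow x\searrow y$ with $f_y\supseteq\{y',y\}$ forces $x\in f_y$; and in the only remaining case, $y'\searrow z$ and $x\searrow y'$ give $x\searrow y'\searrow z$ with $E\supseteq\{x,z\}$, forcing $y'\in E$. Each conclusion contradicts the relevant restriction, so $O_1$ cannot occur.

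For $O_2$ on $\{x,y,z,w,v\}$, let $E_1,E_2,E_3,E_4$ realise $\{x,y\},\{z,w\},\{x,y,z,w\},\{y,z,v\}$. Using the automorphism $(x\,w)(y\,z)$ of $O_2$ I may assume $y\searrow z$. The crucial comparabilities are then obtained and \emph{oriented} as follows: $x\searrow z$ (otherwise $y\searrow z\searrow x$ with $E_1\supseteq\{x,y\}$ forces $z\in E_1$); $y\searrow w$ (otherwise $w\searrow y\searrow z$ with $E_2\supseteq\{z,w\}$ forces $y\in E_2$); then $x\searrow v$ (otherwise $v\searrow x\searrow z$ with $E_4\supseteq\{v,z\}$ forces $x\in E_4$) and $v\searrow w$ (otherwise $y\searrow w\searrow v$ with $E_4\supseteq\{y,v\}$ forces $w\in E_4$). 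The last two give $x\searrow v\searrow w$; since $E_3\supseteq\{x,w\}$, Lemma~\ref{lem;prec2} forces $v\in E_3$, contradicting $v\notin E_3$.

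The step I expect to be most delicate is the handling of $O_2$: the four edges of $O_2$ do \emph{not} separate all pairs (the pairs $\{x,y\}$, $\{z,w\}$, $\{y,v\}$, $\{z,v\}$ are inseparable), so Lemma~\ref{lem;prec} alone cannot produce a total $\searrow$-order on the five vertices. The point of the argument is that the full order is never needed; each comparability I actually use is separable, and its orientation is pinned down by pairing it with one of the small edges $E_1,E_2,E_4$ as the ``two-ends'' witness in Lemma~\ref{lem;prec2}. In writing this up I would verify, in each of the five applications, that the named hyperedge genuinely contains both ends of the chain and genuinely omits the middle vertex (this is exactly where the prescribed restrictions $E_i\cap\{x,y,z,w,v\}=e_i$ and the analogous facts for $f_x,f_y,f_z,E$ are used), and that the relabelling for $O_2$ respects the automorphism, so that the normalisation $y\searrow z$ is truly without loss of generality.
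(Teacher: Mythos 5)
Your proof is correct. The $O_1$ half is essentially the paper's own argument (the same normalisation $x\searrow y\searrow z$ followed by the same case analysis on $y'$, each case killed by Lemma~\ref{lem;prec2}). The $O_2$ half, however, takes a genuinely different and arguably cleaner route. The paper descends to the digraph level: it introduces the prey vertices $a,b\in V(D)$ with $N_D^-(a)\cap V(O_2)=\{y,z,v\}$ and $N_D^-(b)\cap V(O_2)=\{x,y,z,w\}$, and, because $a$ need not be separable from the vertices of $O_2$ by hyperedges, it cannot invoke Lemma~\ref{lem;prec} on $a$; instead it proves and uses the auxiliary Lemmas~\ref{lem;prec3} and~\ref{lem;prec4} to force $x\searrow a\searrow w$, and then applies Lemma~\ref{lem;prec2} to put the in-neighbor $v$ of $a$ into $N_D^-(b)$. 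You observe that $v$ itself is already separable from $x$ (via $E_1$ and $E_4$) and from $w$ (via $E_2$ and $E_4$), so Lemma~\ref{lem;prec} plus two applications of Lemma~\ref{lem;prec2} pin down $x\searrow v\searrow w$ directly, and a final application of Lemma~\ref{lem;prec2} to $E_3$ finishes. Your version stays entirely at the level of the subhypergraph and dispenses with Lemmas~\ref{lem;prec3} and~\ref{lem;prec4} (which the paper still needs elsewhere, in the refutation of statement $(\ast)$); the paper's version buys nothing extra here beyond illustrating those two lemmas. Your checks of the separating hyperedges in each application of Lemma~\ref{lem;prec} and of the automorphism $(x\,w)(y\,z)$ justifying the normalisation $y\searrow z$ are exactly the right points to verify, and they all go through.
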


\begin{proof}
Suppose that there exists a DPO $D$ such that $C\cH(D)$ contains $O_1$
as a subhypergraph.
Let $\{ x,y,z\}$ be the hyperedge of $O_1$ of size $3$.
By Lemma~\ref{lem;prec}, any two $u$, $v$ of $\{x,y,z\}$ satisfy
$u\searrow v$ or $v\searrow u$. Thus $\alpha \searrow \beta\searrow \gamma$
for a permutation $(\alpha\beta\gamma)$ on $\{x,y,z\}$.
Without loss of generality, we may assume that $x \searrow y \searrow z$.
Let $y'$ be the vertex of $O_1$ other than $x,y,z$ that is adjacent to $y$.
By Lemma~\ref{lem;prec}, either $x \searrow y'$ or $y' \searrow x$,
and either $z \searrow y'$ or $y' \searrow z$.
If $y' \searrow x$, then $y' \searrow x \searrow y$
and so, by Lemma~\ref{lem;prec2}, $x$ and $y'$ are adjacent,
a contradiction. Thus $x \searrow y'$.
If $y' \searrow z$,  then $x \searrow y' \searrow z$
and so, by Lemma~\ref{lem;prec2}, $x$ and $y'$ are adjacent,
a contradiction.
Thus $z \searrow y'$. Then $y \searrow z \searrow y'$ and so,
by Lemma~\ref{lem;prec2}, $y'$ and $z$ are adjacent, a contradiction.
Hence, the competition hypergraph of a DPO
does not contain $O_1$ as a subhypergraph.

Suppose that there exists a DPO $D$ such that $C\cH(D)$
contains $O_2$ as a subhypergraph.
Let $V(O_2)=\{ x,y,z,w,v\}$ and let $\{ x,y,z,w\}$
be the hyperedge of $O_2$ of size $4$, and $\{x,y\}$ and $\{z,w\}$
be the hyperedges of size $2$. 
The vertices $y$ and $z$ satisfy the condition of Lemma~\ref{lem;prec}
and so $y \searrow z$ or $z \searrow y$.
Without loss of generality, we may assume that $y \searrow z$.
Now $x$ and $z$ satisfy the condition of Lemma~\ref{lem;prec}
and so $x \searrow z$ or $z \searrow x$.
If  $z \searrow x$ then $y \searrow z \searrow x$
and so $z$ belongs to any hyperedge containing $x$ and $y$
by Lemma~\ref{lem;prec2}, a contradiction.
Therefore $x \searrow z$.
Similarly we can show that $y \searrow w$.
Since $\{y,z,v\}, \{ x,y,z,w\} \in E(O_2)$,
there exist $a$, $b \in V(D)$ such that
$\{y,z,v\} = N_D^{-}(a) \cap V(O_2)$ and
$\{ x,y,z,w\}=N^{-}_D(b)\cap V(O_2)$.
If $a \prec w$, then $w\in N_D^{-}(a) \cap V(O_2)$, a contradiction.
If $w \prec a$, then $v\in N^-_D(b)\cap V(O_2)$, a contradiction.
Therefore $a \not\prec w$ and $w \not\prec a$.
Thus $y$, $w$, $a$ satisfy the condition of Lemma~\ref{lem;prec3}
and so $a \searrow w$.
If $a \prec x$ then $x\in N_D^{-}(a) \cap V(O_2)$, a contradiction.
Therefore $a \not\prec x$, and so
 $x$, $z$, $a$ satisfy the condition of Lemma~\ref{lem;prec4}
and so $x \searrow a$.
Thus $x \searrow a \searrow w$.
Since $N^{-}_D(b)$ is a hyperedge containing $x$ and $w$,
the vertex $v$ which is an in-neighbor of $a$ belongs to $N^{-}_D(b)$
by Lemma~\ref{lem;prec2}.
Since $v\in V(O_2)$, $v\in N^{-}_D(b)\cap V(O_2)$.
However, $\{ x,y,z,w\} = N^{-}_D(b) \cap V(O_2)$, a contradiction.
Hence the competition hypergraph of a DPO does not contain $O_2$
as a subhypergraph.
\end{proof}

By Lemmas~\ref{lem;C} and \ref{lem;O},
the following holds:

\begin{Thm}\label{thm;Main}
For a doubly partial order $D$,
the competition hypergraph of $D$ is interval
if and only if
it does not contain any hypergraph in $\mathcal{M}\cup\mathcal{F}$
as a subhypergraph.
\end{Thm}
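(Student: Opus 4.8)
The plan is to combine the two directions already assembled in the preceding lemmas with the forbidden-subhypergraph characterization of Theorem~\ref{Thm:forbidden}. Recall that $\cH$ is interval if and only if it contains none of the hypergraphs in $\mathcal{C}\cup\mathcal{M}\cup\mathcal{F}\cup\{O_1,O_2\}$ as a subhypergraph. So the strategy is to show that, for the competition hypergraph $C\cH(D)$ of a DPO, the families $\mathcal{C}$ and $\{O_1,O_2\}$ never occur, so that the only possible obstructions are the ones in $\mathcal{M}\cup\mathcal{F}$; then ``interval'' is equivalent to ``contains no member of $\mathcal{M}\cup\mathcal{F}$''.

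First I would prove the forward (contrapositive) direction. Suppose $C\cH(D)$ is \emph{not} interval. By Theorem~\ref{Thm:forbidden}, it must contain some hypergraph from $\mathcal{C}\cup\mathcal{M}\cup\mathcal{F}\cup\{O_1,O_2\}$ as a subhypergraph. By Lemma~\ref{lem;C}, no member of $\mathcal{C}$ can be a subhypergraph of $C\cH(D)$, and by Lemma~\ref{lem;O}, neither $O_1$ nor $O_2$ can be a subhypergraph of $C\cH(D)$. Hence the contained forbidden hypergraph must lie in $\mathcal{M}\cup\mathcal{F}$, which gives exactly the claim that $C\cH(D)$ contains a member of $\mathcal{M}\cup\mathcal{F}$.

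Next I would prove the converse direction, again by contraposition. Suppose $C\cH(D)$ \emph{is} interval. Then by Theorem~\ref{Thm:forbidden} it contains no subhypergraph isomorphic to any member of $\mathcal{C}\cup\mathcal{M}\cup\mathcal{F}\cup\{O_1,O_2\}$; in particular it contains no member of $\mathcal{M}\cup\mathcal{F}$. This is immediate and requires no new work beyond quoting Theorem~\ref{Thm:forbidden}.

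In effect, this theorem is a clean bookkeeping step that merely splices together Theorem~\ref{Thm:forbidden} with Lemmas~\ref{lem;C} and~\ref{lem;O}; all the genuine mathematical content has already been discharged. The one point that deserves care---and the closest thing to an obstacle---is making sure the logical equivalence is stated correctly: Theorem~\ref{Thm:forbidden} forbids the \emph{full} list $\mathcal{C}\cup\mathcal{M}\cup\mathcal{F}\cup\{O_1,O_2\}$, and it is precisely because Lemmas~\ref{lem;C} and~\ref{lem;O} rule out the families $\mathcal{C}$ and $\{O_1,O_2\}$ \emph{a priori} for competition hypergraphs of DPOs that the criterion collapses to the reduced list $\mathcal{M}\cup\mathcal{F}$. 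One should verify that these two lemmas cover \emph{all} of $\mathcal{C}\cup\{O_1,O_2\}$ and nothing in $\mathcal{M}\cup\mathcal{F}$, so that the removal of the ruled-out families is exactly balanced by the remaining forbidden families, yielding the stated ``if and only if.''
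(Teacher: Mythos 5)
Your proposal is correct and is exactly the paper's argument: the paper derives Theorem~\ref{thm;Main} directly from Theorem~\ref{Thm:forbidden} together with Lemmas~\ref{lem;C} and~\ref{lem;O}, just as you do. No gaps.
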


It follows from Theorems~\ref{main;interval} and \ref{thm;Main}
that $\mathcal{M} \cup \mathcal{F}$ is the family
of all the forbidden subhypergraphs
for the competition hypergraph of a DPO being interval.
A hypergraph is \textit{chordal} if any cycle of length at least $4$
has two nonconsecutive vertices that are adjacent.
If a hypergraph does not contain any hypergraph in $\mathcal{C}$
as a subhypergraph, then it is chordal.
Therefore, by Lemma~\ref{lem;C}, the following corollary
immediately holds.

\begin{Cor}
The competition hypergraph of a doubly partial order is chordal.
\end{Cor}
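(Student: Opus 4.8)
The plan is to obtain the corollary by combining Lemma~\ref{lem;C} with the general observation, recorded just above the statement, that a hypergraph containing no member of $\mathcal{C}$ as a subhypergraph is chordal. Since Lemma~\ref{lem;C} guarantees that the competition hypergraph $C\cH(D)$ of a doubly partial order $D$ has no $C_n$ with $n \ge 3$ as a subhypergraph, the corollary follows immediately once that observation is justified. The only real work, then, is to verify the observation itself, and I would do this by contraposition: assuming $C\cH(D)$ is not chordal, I would exhibit a member of $\mathcal{C}$ as a subhypergraph, contradicting Lemma~\ref{lem;C}.

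Concretely, suppose $C\cH(D)$ fails to be chordal. Then there is a cycle $v_0 v_1 \cdots v_k$ of length at least $4$, so $k \ge 3$, such that no two nonconsecutive vertices among $v_0, \ldots, v_k$ are adjacent. By the definition of a cycle there are distinct hyperedges $e_1, \ldots, e_{k+1}$ of $C\cH(D)$ with $\{v_{i-1}, v_i\} \subseteq e_i$ for $1 \le i \le k$ and $\{v_0, v_k\} \subseteq e_{k+1}$. I would then pass to the subhypergraph induced on $S := \{v_0, v_1, \ldots, v_k\}$, whose hyperedges are exactly the sets $e \cap S$ for $e \in E(C\cH(D))$ with $|e \cap S| \ge 2$, and show that this induced subhypergraph is precisely the $2$-uniform cycle $C_{k+1} \in \mathcal{C}$.

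The crux of the argument, and the step demanding the most care, is to rule out any induced hyperedge $e \cap S$ other than the consecutive pairs $\{v_{i-1}, v_i\}$ and $\{v_0, v_k\}$. If some $e \cap S$ contained a vertex together with a vertex nonconsecutive to it on the cycle, those two vertices would be adjacent in $C\cH(D)$, contradicting our assumption; in particular no induced hyperedge can have size $\ge 3$, since any three vertices of $S$ necessarily include a nonconsecutive pair once $k+1 \ge 4$ (three pairwise consecutive vertices would force the cycle to have only three vertices). Hence every induced hyperedge is a $2$-element set joining consecutive cycle vertices, and these are exactly the edges of $C_{k+1}$. This produces a member of $\mathcal{C}$ as a subhypergraph of $C\cH(D)$, contradicting Lemma~\ref{lem;C}, and therefore $C\cH(D)$ is chordal. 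Everything except this verification that the induced edges cannot be enlarged is a direct appeal to the definitions and to Lemma~\ref{lem;C}.
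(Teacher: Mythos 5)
Your proposal is correct and follows essentially the same route as the paper: the paper derives the corollary by combining Lemma~\ref{lem;C} with the observation that a hypergraph containing no member of $\mathcal{C}$ as a subhypergraph is chordal, exactly as you do. The only difference is that you spell out the verification of that observation (which the paper leaves as an unproved remark), and your verification is sound.
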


Theorem~\ref{thm;DPOIntKim} shows that any interval graph
can be made into the competition graph of a DPO
by adding sufficiently many isolated vertices.
In the same context, we may ask whether
the following statement is true:
\begin{quote}
\begin{itemize}
\item[($\ast$)]
An interval hypergraph  can be made into
the competition hypergraph of a DPO
by adding sufficiently many isolated vertices.
\end{itemize}
\end{quote}
\begin{figure}
\centering{
\psfrag{a}{\small$v_1$}
\psfrag{b}{\small$v_2$}
\psfrag{c}{\small$v_3$}
\psfrag{d}{\small$v_4$}
\psfrag{e}{\small$v_5$}
\psfrag{f}{\small$v_6$}
\includegraphics{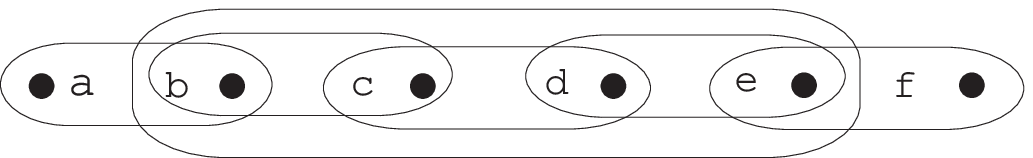}
\caption{An interval hypergraph $\cH$}\label{fig:inthyp}
}
\end{figure}
We can show that the answer is no by taking an interval hypergraph $\cH$
defined by
$V(\cH) = \{ v_1,v_2,v_3,v_4,v_5,v_6\}$ and
\[
E(\cH) = \{ \{v_1,v_2\} ,\{v_2,v_3\}, \{v_3,v_4\},
\{v_4,v_5\}, \{v_5,v_6\}, \{v_2,v_3,v_4,v_5\} \}
\]
(see Figure~\ref{fig:inthyp}).
To show by contradiction that $\cH$ cannot be made into
the competition hypergraph of a DPO even if
sufficiently many isolated vertices are added,
suppose that there is a DPO $D$ such that $C\cH(D)=\cH \cup I$
where $I$ is a set of isolated vertices.
One can observe that any two distinct vertices
among $v_2$, $v_3$, $v_4$, $v_5$ satisfy the condition
in Lemma~\ref{lem;prec}.
Thus, for any two distinct vertices $x$, $y$ in
the hyperedge  $\{v_2, v_3, v_4, v_5\}$,
either $x\searrow y$ or $y\searrow x$ holds.
By applying a similar argument for assuming that
$v_1 \searrow \cdots \searrow v_n$ in the proof of Lemma~\ref{lem;C},
we may assume without loss of generality that
$v_2 \searrow v_3 \searrow v_4 \searrow v_5$.
Since $\{v_3,v_4\}\in E(\cH)$,
there exists a vertex $u$
such that $N_D^{-}(u)=\{v_3,v_4\}$.
If $u=v_i$ for $i \in \{1,2,5,6\}$,
then $v_3$ and $v_4$ are contained in any hyperedge
containing $v_i$, a contradiction.
Thus $u \in I$.
Since $v_2 \searrow v_3$, $u \prec v_3$,
and $u \not\prec v_2$, it follows from Lemma~\ref{lem;prec4}
that $v_2 \searrow u$.
On the other hand, since $v_4 \searrow v_5$, $u \prec v_4$,
and $u \not\prec v_5$,
it follows from Lemma~\ref{lem;prec3} that $u \searrow v_5$.
We have shown that $v_2 \searrow u \searrow v_5$.
Then, by Lemma~\ref{lem;prec2}, $u$ is contained in
the hyperedge $\{v_2,v_3,v_4,v_5\}$,
which is a contradiction to $u \in I$.

\smallskip
The following statement weaker than $(\ast)$
seems to be worthy of mention:

\begin{Thm}\label{thm;Main2}
For an interval hypergraph $\cH$,
there exists a doubly partial order $D$
whose competition hypergraph contains $\cH$ as a subhypergraph.
\end{Thm}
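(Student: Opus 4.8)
The plan is to exploit directly the defining property of an interval hypergraph: its vertices admit a linear order in which every hyperedge is a set of consecutive vertices (the ``consecutive ones'' property, which is immediate from the definition of an interval hypergraph). So I would write $V(\cH)=\{v_1,v_2,\ldots,v_m\}$ in such an order, so that every hyperedge of $\cH$ has the form $\{v_i,v_{i+1},\ldots,v_j\}$ for some $1\le i<j\le m$, and then embed these vertices on an anti-diagonal of $\mathbb{R}^2$ by $\phi(v_k):=(k,\,m+1-k)$. As in the constructions of Lemmas~\ref{lem;M} and~\ref{lem;F}, these points are pairwise incomparable under $\prec$, so the DPO contains no arc between any two of them.

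The key step generalizes the role of the set $B_n$ in the earlier lemmas: for each hyperedge $e=\{v_i,\ldots,v_j\}$ of $\cH$ I would add one new ``prey'' vertex $p_e:=\bigl(i-\tfrac13,\ (m+1-j)-\tfrac13\bigr)$, placed just below and to the left of the lower corner $(i,m+1-j)$ of $\phi(v_i)$ and $\phi(v_j)$. A direct check of the defining inequalities of $\prec$ shows that $\phi(p_e)\prec\phi(v_k)$ holds precisely when $i\le k\le j$: the first coordinate gives $k>i-\tfrac13$, i.e. $k\ge i$, and it excludes $v_{i-1}$ because $i-\tfrac13\ge i-1$; the second coordinate gives $k<j+\tfrac13$, i.e. $k\le j$, and it excludes $v_{j+1}$ because $(m+1-j)-\tfrac13\ge m-j$. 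Thus $p_e$ is a common out-neighbor of exactly the vertices $v_i,\ldots,v_j$, so in the DPO $D$ on $S\cup\{p_e\mid e\in E(\cH)\}$, where $S:=\phi(V(\cH))$, we have $N_D^-(p_e)\cap S=e$.

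It then remains to check that the subhypergraph of $C\cH(D)$ induced by $S$ is exactly $\cH$, with no spurious hyperedges. Since distinct hyperedges of $\cH$ are distinct consecutive blocks, the points $p_e$ are pairwise distinct and distinct from every $\phi(v_k)$, so $D$ is a well-defined DPO. I would then list the hyperedges of $C\cH(D)$: first, $N_D^-(\phi(v_k))=\emptyset$ for every $k$, since no $\phi(v_l)$ lies above-right of $\phi(v_k)$ and, by the same coordinate bookkeeping, no corner point $p_e$ lies above-right of $\phi(v_k)$ either; hence the original vertices contribute nothing. Second, each $N_D^-(p_e)$ possibly contains some of the other corner points, but its trace on $S$ is exactly $e$ by the previous step. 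Consequently the edge set of the subhypergraph induced by $S$ is precisely $\{e\mid e\in E(\cH)\}=E(\cH)$, and this subhypergraph is isomorphic to $\cH$.

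The main obstacle is the coordinate bookkeeping in the middle paragraph: one must verify simultaneously that each $p_e$ sees exactly the intended block $\{v_i,\ldots,v_j\}$ — excluding both the predecessor $v_{i-1}$ and the successor $v_{j+1}$ — and that no $p_e$ ever lies above-right of an original vertex, which would manufacture an unwanted hyperedge on $S$. Both requirements reduce to the two inequalities controlled by the $\tfrac13$-offset (any fixed offset in $(0,1)$ works), which is precisely what keeps every $p_e$ strictly inside the intended cell of the anti-diagonal grid.
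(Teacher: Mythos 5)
Your proof is correct and takes essentially the same approach as the paper's: order the vertices by the consecutive-ones property, place them on an anti-diagonal, and add one prey vertex per hyperedge just below and to the left of its corner so that its in-neighborhood traces to exactly that hyperedge. The only cosmetic difference is that the paper uses the corner point $(\min(e)-1,\,n-\max(e))$ where you use a $\tfrac{1}{3}$-offset; the verification is the same.
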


\begin{proof}
Let $\cH$ be an interval hypergraph.
Then there exists an ordering $v_1,v_2,\ldots,v_n$ of the vertices
of $\cH$ such that any hyperedge of $\cH$ is consecutive in the ordering.
For each $i=1, 2, \ldots, n$, let $a_i := (i,n+1-i) \in \mathbb{R}^2$.
For each hyperedge $e$, we define 
$\min (e) := \min\{ j \mid v_j\in e\}$
and $\max (e):= \max \{ j \mid v_j\in e\}$
and let $b_e:=(\min (e)-1,n-\max (e)) \in \mathbb{R}^2$.
Let $D$ be a DPO on the set
$\{a_i \mid 1\le i\le n\} \cup \{  b_e \mid e\in E(\cH) \}$,
and
let $\cH'$ be the subhypergraph of $C\cH(D)$ induced by
the set $\{a_i \mid 1\le i\le n \}$.
We will show that the bijection
from $V(\cH)$ to $V(\cH')$
which maps a vertex $v_i$ to the vertex $a_i$
is an isomorphism between hypergraphs $\cH$ and $\cH'$.

Let $e$ be a hyperedge in $\cH$,
and let $i:=\min(e)$ and $k:=\max(e)$.
Then $e = \{v_i, v_{i+1}, \ldots, v_k \}$.
Since $e$ contains at least two vertices,
$i<k$.
Then $b_e=(i -1, n - k) \in \mathbb{R}^2$ is a vertex of $D$.
If $i \leq j \leq k$, then $i -1 < j$ and $n - k <  n + 1 - j$.
If $1 \leq j < i$, then $i - 1 \geq j$.
If $k < j \leq n$, then $n - k \geq n+1-j$.
Therefore, we have
$b_e=(i -1, n - k) \prec (j,n+1-j) = a_j$ if $i \leq j \leq k$
and
$b_e=(i -1, n - k) \not\prec (j,n+1-j) = a_j$ if
$1 \leq j < i$ or $k < j \leq n$.
Since $N_D^-(b_e)$ is a hyperedge of $C\cH(D)$,
$\{ a_i, a_{i+1}, \ldots, a_k \}$ is a hyperedge in $\cH'$.

Let $e'$ be a hyperedge in $\cH'$.
Let $i := \min \{ j \mid a_j \in e' \}$
and $k := \max \{ j \mid a_j \in e' \}$.
Note that $a_j \not\in e'$ for $1 \leq j < i$ and $k < j \leq n$.
Since $e'$ contains at least two vertices, $i < k$.
Then there exists a vertex $z = (z_1,z_2) \in V(D)$
such that $e'=N^-_D(z)$.
Note that $z \prec a_i$ and $z \prec a_k$.
If $i \leq j \leq k$, then $z = (z_1,z_2) \prec (j,n+1-j)=a_j$
since $z_1 < i$ and $z_2 < n+1-k$.
Therefore, $e' = \{a_i, a_{i+1}, \ldots, a_k\}$.
By the definition of $D$ and the fact that
$z \neq a_j$ for all $j \in \{1,2,\ldots,n\}$,
$z = b_{e} =(\min (e)-1,n-\max (e))$ for some hyperedge $e$ of $\cH$.
By the consecutive property of the ordering $v_1,v_2,\ldots,v_n$,
the hyperedge $e$ contains $v_j$ for each $j$
such that $\min (e) \le j \le \max (e)$.
Now we show that $\min (e)=i$ and $\max(e)=k$.
Since $\min (e)-1 = z_1 < i$ and $n-\max (e) = z_2 < n + 1 - k$,
we obtain $\min(e) \leq i$ and $k \leq \max(e)$.
Since $z \not\prec a_{i-1}$,
it holds that $\min(e)-1 \geq i-1$ or $n-\max(e) \geq n+1-(i-1)$.
If the latter happens, then $\max(e) \leq i-2$,
which contradicts the choice of $i$. Thus $\min(e) \geq i$.
Since $z \not\prec a_{k+1}$,
it holds that $\min(e)-1 \geq k+1$ or $n-\max(e) \geq n+1-(k+1)$.
By the choice of $k$, the latter holds and so $\max(e) \leq k$.
Thus $\min (e)=i$ and $\max(e)=k$.

Hence the theorem holds.
\end{proof}

\section*{Acknowledgments} 

The first author's work was supported by 
a National Research Foundation of Korea (NRF) grant 
funded by the Korean Government (MEST) (No. 2011-0005188). 
The third author's research was supported by 
a National Research Foundation of Korea Grant 
funded by the Korean Government, 
the Ministry of Education, Science and Technology (NRF-2011-357-C00004). 
The fourth author was supported by JSPS Research Fellowships 
for Young Scientists.



\begin{thebibliography}{99}

\bibitem{cable}
C. Cable, K. F. Jones, J. R. Lundgren, S. Seager:
Niche graphs,
\emph{Discrete Applied Mathematics}
\textbf{23} (1989) 231--241.

\bibitem{chokim}
H. H. Cho, S.-R. Kim:
A class of acyclic digraphs with interval competition graphs,
\emph{Discrete Applied Mathematics}
\textbf{148} (2005) 171--180.

\bibitem{firsti}
H. H. Cho, S.-R. Kim, Y. Nam:
The $m$-step competition graph of a digraph,
{\em Discrete Applied Mathematics}
\textbf{105} (2000) 115--127.

\bibitem{cohen1}
J. E. Cohen:
{Interval graphs and food webs: a finding and a problem},
{\it RAND Corporation Document 17696-PR},
Santa Monica, California, 1968.

\bibitem{cohen2}
J. E. Cohen:
{\it Food Webs and Niche Space}
(Princeton University Press, Princeton, NJ, 1978).

\bibitem{fi}
D. C. Fisher, J. R. Lundgren, S. K. Merz,  K. B. Reid:
The domination and competition graphs of a tournament,
{\em Journal of Graph Theory}
\textbf{29} (1998) 103--110.

\bibitem{Fraughnaugh}
K. F. Fraughnaugh, J. R. Lundgren, J. S. Maybee,
S. K. Merz,  N. J. Pullman:
Competition graphs of strongly connected and Hamiltonian digraphs,
\emph{SIAM Journal on Discrete Mathematics}
\textbf{8} (1995) 179--185.


\bibitem{SJkim}
S.-J. Kim, S.-R. Kim,  Y. Rho:
On CCE graphs of doubly partial orders,
{\it Discrete Applied Mathematics}
\textbf{155} (2007) 971--978.


\bibitem{Kim93}
S.-R. Kim:
The competition number and its variants,
\emph{Quo Vadis, Graph Theory?}
(Annals of Discrete Mathematics \textbf{55},
North-Holland, Amsterdam, 1993) 313--325.

\bibitem{Niche2009}
S.-R. Kim, J. Y. Lee, B. Park, W. J. Park, Y. Sano:
The niche graphs of doubly partial orders,
\emph{Congressus Numerantium}
\textbf{195} (2009) 19--32.

\bibitem{kimrob}
S.-R. Kim, F. S. Roberts:
Competition graphs of semiorders
and the conditions $C(p)$ and $C^{*}(p)$,
\emph{Ars Combinatoria}
\textbf{63} (2002) 161--173.

\bibitem{LK}
J. Y. Lee, S.-R. Kim:
Competition graphs of acyclic digraphs satisfying condition $C^*(p)$,
\emph{Ars Combinatoria}
\textbf{93} (2009) 321--332.

\bibitem{LuWu}
J. Lu, Y. Wu:
Two minimal forbidden subgraphs for double competition graphs
of posets of dimension at most two,
\emph{Applied Mathematics Letters}
\textbf{22} (2009) 841--845.


\bibitem{Lundgren89}
J. R. Lundgren:
Food webs, competition graphs,
competition-common enemy graphs, and niche graphs,
\emph{Applications of Combinatorics and Graph Theory
to the Biological and Social Sciences}
(The IMA Volumes in Mathematics and its Applications
\textbf{17}, Springer-Verlag, New York,
1989) 221--243.

\bibitem{lunmayras}
J. R. Lundgren, J. S. Maybee, C. W. Rasmussen:
Interval competition graphs of symmetric digraphs,
\emph{Discrete Mathematics}
\textbf{119} (1993) 113--122.


\bibitem{PLK:mStepDPO}
{B. Park, J. Y. Lee, S.-R. Kim}:
{The $m$-step competition graphs of doubly partial orders},
{\it Applied Mathematics Letters}
\textbf{24} (2011) 811--816.


\bibitem{PS:Hyper}
{B. Park, Y. Sano}:
{On the hypercompetition numbers of hypergraphs},
{\it Ars Combinatoria}
\textbf{100} (2011) 151--159.

\bibitem{PS:phylo}
{B. Park, Y. Sano}:
The phylogeny graphs of doubly partial orders,
{\it Preprint}, arXiv:1103.4540, March 2011.


\bibitem {RayRob}
A. Raychaudhuri, F. S. Roberts:
Generalized competition graphs and their applications,
{\it I{X} Symposium on Operations Research. {P}art {I}.
{S}ections 1--4 ({O}snabr\"uck, 1984)}
(Methods of Operations Research
\textbf{49}, Athen\"aum/Hain/Hanstein, K\"onigstein, 1985)
295--311.


\bibitem {Rob78}
F. S. Roberts:
Food webs, competition graphs, and the boxicity
of ecological phase space,
{\it Theory and Applications of Graphs
(Proc. Internat. Conf., Western Mich. Univ., Kalamazoo, Mich., 1976)}
(Lecture Notes in Mathematics {\bf 642},
Springer, New York, 1978) 477--490.

\bibitem {Bolyai}
F. S. Roberts:
Competition graphs and phylogeny graphs,
in L. Lovasz (ed.),
\emph{Graph Theory and Combinatorial Biology ({B}alatonlelle, 1996)}
(Bolyai Society Mathematical Studies \textbf{7},
J\'anos Bolyai Math. Soc., Budapest, 1999) 333--362.


\bibitem{sano}
Y. Sano:
The competition-common enemy graphs of digraphs
satisfying Conditions $C(p)$ and $C'(p)$,
{\it Congressus Numerantium}
{\bf 202} (2010) 187--194.

\bibitem{sc}
{D. Scott}:
The competition-common enemy graph of a digraph,
{\em Discrete Applied Mathematics}
\textbf{17} (1987) 269--280.

\bibitem{CompHyp}
{M. Sonntag, H.-M. Teichert}:
{Competition hypergraphs},
{\it Discrete Applied Mathematics}
{\bf 143} (2004) 324--329.

\bibitem{Son2}
{M. Sonntag, H.-M. Teichert}:
{Competition hypergraphs of digraphs with certain properties.
{I}. Strong connectedness},
{\it Discussiones Mathematicae Graph Theory}
{\bf 28} (2008) 5--21.

\bibitem{Son3}
{M. Sonntag, H.-M. Teichert}:
{Competition hypergraphs of digraphs with certain properties.
{II}. Hamiltonicity},
{\it Discussiones Mathematicae Graph Theory}
{\bf 28} (2008) 23--34.

\bibitem{Son4}
{M. Sonntag, H.-M. Teichert}:
{Competition hypergraphs of products of digraphs},
{\it Graphs and Combinatorics}
\textbf{25} (2009) 611--624.

\bibitem{Moore}
W. T. Trotter, J. I. Moore:
{Characterization problems for graphs,
partial ordered sets, lattices and families of sets},
\emph{Discrete Mathematics}
\textbf{16} (1976) 361--381.

\bibitem{voloshin}
V. I. Voloshin: 
\textit{Introduction to graph and hypergraph theory}, 
Nova Kroshka Books, 2009.


\bibitem{WuLu}
Y. Wu, J. Lu:
Dimension-2 poset competition numbers and dimension-2 poset double
competition numbers,
{\em Discrete Applied Mathematics}
\textbf{158} (2010) 706-717.

\end{thebibliography}
\end{document}